\newcounter{minutes}
\newcounter{hours}
\curraddr{}
\email{masayo@nda.ac.jp}
\curraddr{}
\email{parisa.hariri@utu.fi}
\curraddr{}
\email{mmocanu@ub.ro}
\email{vuorinen@utu.fi}
\keywords{triangular ratio metric, Ptolemy-Alhazen problem, reflection of light}
\subjclass[2010]{30C20, 30C15, 51M99}
\dedicatory{}
\theoremstyle{plain}
\newtheorem{thm}[equation]{Theorem}
\newtheorem{cor}[equation]{Corollary}
\newtheorem{lem}[equation]{Lemma}
\newtheorem{example}[equation]{Example}
\newtheorem{prop}[equation]{Proposition}
\newtheorem{problem}[equation]{Problem}
\theoremstyle{definition}
\theoremstyle{remark}
\newtheorem{rem}[equation]{Remark}
\newtheorem{nonsec}[equation]{}
\numberwithin{equation}{section}
\newcommand{\beq}{\begin{equation}}
\newcommand{\eeq}{\end{equation}}
\newcommand{\ben}{\begin{enumerate}}
\newcommand{\een}{\end{enumerate}}
\newcommand{\bequu}{\begin{eqnarray*}}
\newcommand{\eequu}{\end{eqnarray*}}
\newcommand{\bequ}{\begin{eqnarray}}
\newcommand{\eequ}{\end{eqnarray}}
\begin{document}
\def\thefootnote{}

\title[]{The ptolemy-Alhazen problem and spherical mirror reflection}
\author[M. Fujimura]{Masayo Fujimura}
\address{Department of Mathematics, National Defense Academy of Japan, Japan}
\author[P. Hariri]{Parisa Hariri}
\address{Department of Mathematics and Statistics, University of Turku,
         Turku, Finland}
\author[M. Mocanu]{Marcelina Mocanu}
\address{Department of Mathematics and Informatics, Vasile Alecsandri
         University of Bacau, Romania }
\author[M. Vuorinen]{Matti Vuorinen}
\address{Department of Mathematics and Statistics, University of Turku,
         Turku, Finland}
\date{}

\begin{abstract}
  An ancient optics problem of Ptolemy, studied later by Alhazen, is
  discussed. This problem deals with reflection of light in spherical mirrors.
  Mathematically this reduces to the solution of a quartic equation, which we
  solve and analyze using a symbolic computation software. Similar problems
  have been recently studied in connection with ray-tracing, catadioptric optics, scattering of
  electromagnetic waves, and mathematical billiards, but we were led to this
  problem in our study of the so-called triangular ratio metric.
\end{abstract}

\maketitle

\footnotetext{\texttt{{\tiny File:~\jobname .tex, printed: \number\year-%
\number\month-\number\day, \thehours.\ifnum\theminutes<10{0}\fi\theminutes}}}
\makeatletter

\makeatother

%    Text of article.

%%%%%%%%%%%%%%%%%%%%%%%%%%%%%%%%%%%%%
%%%%%%%%%%%%%%%%%%%%%%%%%%%%%%%%%%%%%
%%%%%%%%%%%%%%%%%%%%%%%%%%%%%%%%%%%%%

\section{Introduction}

\label{section1}

%%%%%%%%%%%%%%%%%%%%%%%%%%%%%%%%%%%%%
%%%%%%%%%%%%%%%%%%%%%%%%%%%%%%%%%%%%%
%%%%%%%%%%%%%%%%%%%%%%%%%%%%%%%%%%%%%
\setcounter{equation}{0}
The Greek mathematician Ptolemy (ca. 100-170)
formulated a problem concerning reflection of light at a spherical mirror
surface:
\textit{Given a light source and a spherical mirror, find the point
on the mirror where the light will be reflected to the eye of an observer.}
% ``Given a light source and a spherical mirror, find the point on the mirror where
%the light will be reflected to the eye of an observer''.

Alhazen (ca. 965-1040) was a scientist who lived in Iraq, Spain, and Egypt
and extensively studied several branches of science. For instance, he wrote
seven books about optics and studied e.g. Ptolemy's problem as well as many
other problems of optics and is considered to be one of the greatest
researchers of optics before Kepler \cite{a}. Often the above problem is
known as Alhazen's problem \cite[p.1010]{gbl}.

We will consider the two-dimensional version of the problem and present an
algebraic solution for it. The solution reduces to a quartic equation which
we solve with symbolic computation software.

Let $\mathbb{D}$ be the unit disk $\{ z \in \mathbb{C}: |z|<1\},$ and
suppose that the circumference $\partial \mathbb{D} = \{ z \in \mathbb{C}:
|z|=1\}$ is a reflecting curve. This two-dimensional problem reads: Given
two points $z_1,z_2 \in \mathbb{D}\,,$ find $u \in \partial \mathbb{D}$ such
that
\begin{equation}  \label{eq:theptu}
  \measuredangle (z_{1},u,0)=\measuredangle (0,u,z_{2})\,.
\end{equation}
Here $\measuredangle (z,u,w)$ denotes the
radian measure in $(-\pi ,\pi ]$ of the oriented angle with initial side
$[u,z]$ and final side $[u,w]$\,.
This equality
condition for the angles says that the angles of incidence and reflection
are equal, a light ray from $z_1$ to $u$ is reflected at $u$ and goes
through the point $z_2\,.$ Recall that, according to Fermat's principle, light travels between two points along the path that requires the least time, as compared to other nearby paths. One proves that $u=e^{it_{0}}$, $%
t_{0}\in \mathbb{R}$ satisfies (1.1) if and only if $t_{0}$ is a critical
point of the function $t\mapsto \left\vert z_{1}-e^{it}\right\vert
+\left\vert z_{2}-e^{it}\right\vert $, $t\in \mathbb{R}$. In particular,
condition (1.1) is satisfied by the extremum points (a minimum point and a
maximum point, at least) of the function $u\mapsto \left\vert
z_{1}-u\right\vert +\left\vert z_{2}-u\right\vert $, $u\in \partial \mathbb{D%
}\,.$ 

\begin{figure}[t]
  \centerline{\includegraphics[width=0.5\linewidth]{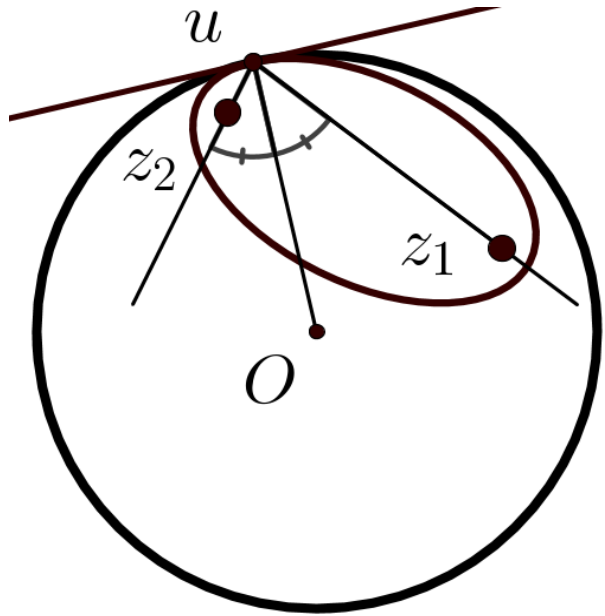}}
  \caption{Light reflection on a circular arc: The angles of incidence and
           reflection are equal. Ptolemy-Alhazen interior problem:
           Given $z_1$ and $z_2\,,$ find $u\,.$
           The maximal ellipse contained in the unit disk with foci
           $z_1$ and $z_2\,$ meets the unit circle at $u\,.$}
  \label{fig:fig1}
\end{figure}

We call this the \textit{interior problem}---there is a natural counterpart
of this problem for the case when both points are in the exterior of the
closed unit disk, called the \textit{exterior problem}. Indeed, this
exterior problem corresponds to Ptolemy's questions about light source,
spherical mirror, and observer. As we will see below, the interior problem
is equivalent to finding the maximal ellipse with foci at $z_1, z_2$
contained in the unit disk, and the point of reflection
$u \in \partial {\mathbb{D}}$ is the tangent point of the ellipse with
the circumference.
Algebraically, this leads to the solution of a quartic equation as we will
see below.
%: % which was not possible in Alhazen's time.
%the solution %formula
%of a quartic equation was published by G. Cardano in 1545,
%five centuries after Alhazen.

We met this problem in a different context, in the study of the triangular
ratio metric $s_G$ of a given domain $G \subset {\mathbb{R}}^2$ defined as
follows for $z_1,z_2 \in G$ \cite{hvz}
\begin{equation}  \label{eq:sdef}
  s_G(z_1,z_2)= \sup_{z \in \partial G} \frac{|z_1-z_2|}{|z_1-z|+|z-z_2|} \, .
\end{equation}
By compactness, this supremum is attained at some point $z_0 \in \partial
G\,.$ If $G$ is convex, it is simple to see that $z_0$ is the point of
contact of the boundary with an ellipse, with foci $z_1,z_2\,,$ contained in
$G\,.$ Now for the case $G= \mathbb{D}$ and $z_1,z_2 \in \mathbb{D}\,$, if the extremal point is $z_0
\in \partial \mathbb{D}\,,$ the connection between the triangular ratio
distance
\begin{equation*}
  s_{\mathbb{D}}(z_1, z_2) = \frac{|z_1- z_2|}{ |z_1- z_0| + |z_2-z_0| }
\end{equation*}
and the Ptolemy-Alhazen interior problem is clear: $u$=$z_0$ satisfies %
\eqref{eq:theptu}. Note that \eqref{eq:theptu} is just a reformulation of a basic
property of the ellipse with foci $z_1,z_2:$ the normal to the ellipse
(which in this case is the radius of the unit circle terminating at the
point $u$) bisects the angle formed by segments joining the foci $z_1,z_2$
with the point $u\,.$
%In addition to \eqref{eq:sdef} we also consider,
%for $p\ge 1\,,z_1,z_2 \in G\,,$
%\begin{equation}  \label{eq:bdef}
 % b_{G,p}(z_1,z_2)= \sup_{z \in \partial G}
 % \frac{ |z_1-z_2|}{\sqrt[p]{|z_1-z|^p+|z-z_2|^p}} %\, .
%\end{equation}
%A. Barrlund \cite{b} studied this expression for the %case $G= \mathbb{R}^n
%\setminus \{0\}$ and proved, answering a question of R.-C. Li \cite{li} that
%it is a metric, and these facts motivated, in part,
%the study of the metric $s_G\,.$
During the past decade,
the $s_G$ metric has been studied in several
papers e.g. by P. H\"ast\"o \cite{h1,h2}; the interested reader is referred
to \cite{hvz} and the references there.

We study the Ptolemy-Alhazen interior problem and in our main result,
Theorem \ref{thm:Fuji}, we give an equation of degree four that yields the
reflection point on the unit circle. Standard symbolic computation software can then be used to
find this point numerically. We also study the Ptolemy-Alhazen exterior
problem. %%%%%%%%HERE

\begin{thm}\label{thm:Fuji}
  The point $u $ in \eqref{eq:theptu} is given as a
  solution of the  equation
  \begin{equation}  \label{eq:equation}
     \overline{z_1}\overline{z_2}u^4-(\overline{z_1}+\overline{z_2})u^3
     +(z_1+z_2)u-z_1z_2=0\,.
  \end{equation}
\end{thm}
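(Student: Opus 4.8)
The plan is to translate the angle condition \eqref{eq:theptu} into an algebraic identity by writing each oriented angle as the argument of a quotient of complex numbers, and then to exploit the relation $\overline{u}=1/u$ that holds on $\partial\mathbb{D}$. First I would record that, with the orientation convention stated after \eqref{eq:theptu}, the oriented angle is $\measuredangle(z,u,w)=\arg\frac{w-u}{z-u}$, since this is the rotation carrying the direction $z-u$ of the initial side to the direction $w-u$ of the final side. Applying this to both angles in \eqref{eq:theptu} gives
\[
  \arg\frac{-u}{z_1-u}=\arg\frac{z_2-u}{-u}.
\]
Because two nonzero complex numbers have equal argument precisely when their quotient is a positive real, condition \eqref{eq:theptu} is equivalent to the statement that
\[
  \frac{u^2}{(z_1-u)(z_2-u)}\in(0,\infty).
\]
This reformulation is the heart of the argument: a transcendental condition on angles has become the assertion that one explicit rational expression in $u,z_1,z_2$ is a positive real number.

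Next I would retain only the weaker consequence that this expression is real, i.e. that it coincides with its complex conjugate. Taking the conjugate and substituting $\overline{u}=1/u$ (valid because $u\in\partial\mathbb{D}$) converts the conjugated denominator $(\overline{z_1}-\overline{u})(\overline{z_2}-\overline{u})$ into $u^{-2}(u\overline{z_1}-1)(u\overline{z_2}-1)$, so the reality condition becomes
\[
  \frac{u^2}{(z_1-u)(z_2-u)}=\frac{1}{(u\overline{z_1}-1)(u\overline{z_2}-1)}.
\]
Clearing denominators yields $u^2(u\overline{z_1}-1)(u\overline{z_2}-1)=(z_1-u)(z_2-u)$, which is now a polynomial identity in $u$.

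Finally I would expand both sides: the left side equals $\overline{z_1}\,\overline{z_2}\,u^4-(\overline{z_1}+\overline{z_2})u^3+u^2$ and the right side equals $u^2-(z_1+z_2)u+z_1z_2$. The common term $u^2$ cancels, and rearranging the remaining terms produces exactly \eqref{eq:equation}. The step requiring the most care is the very first one: one must fix the orientation of the two angles correctly so that the relevant quotient comes out as $u^2/((z_1-u)(z_2-u))$ and not as an expression with a spurious sign. I would also note that I use only reality, not positivity, of the quotient; consequently the quartic \eqref{eq:equation} may capture other critical points of $t\mapsto|z_1-e^{it}|+|z_2-e^{it}|$ (for instance the maximum, corresponding to the external bisector) in addition to the reflection point. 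This is consistent with the statement of the theorem, which asserts that the reflection point is \emph{a} solution of \eqref{eq:equation}. As an alternative to the argument-based opening, one could instead start from the critical-point characterization quoted before \eqref{eq:theptu} and differentiate $|z_j-e^{it}|$, but the route through $\arg$ avoids carrying moduli through the computation and reaches the quartic most directly.
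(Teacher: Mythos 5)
Your proposal is correct and follows essentially the same route as the paper's proof: both translate \eqref{eq:theptu} into the condition that $u^{2}/\bigl((u-z_1)(u-z_2)\bigr)$ (the paper uses its reciprocal) is real, equate the expression with its complex conjugate, and use $\overline{u}=1/u$ on $\partial\mathbb{D}$ to clear denominators and obtain the quartic \eqref{eq:equation}. Your version merely spells out the final expansion, which the paper leaves implicit, and correctly notes that only reality (not positivity) of the quotient is used.
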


It should be noticed that the equation \eqref{eq:equation} may have roots in
the complex plane that are not on the unit circle, and of the roots on the
unit circle, we must choose one root $u\,,$ that minimizes the sum $%
|z_1-u|+|z_2-u|\,.$ We call this root \textit{the minimizing root} of %
\eqref{eq:equation}.

\begin{cor}\label{cor:sDformula}
  For $z_1, z_2 \in \mathbb{D}$ we have
  \begin{equation*}
    s_{\mathbb{D}}(z_1, z_2) = \frac{|z_1- z_2|}{|z_1-u|+ |z_2-u| }
  \end{equation*}
  where $u \in \partial {\mathbb{D} } $ is the minimizing root of
  \eqref{eq:equation}\,.
\end{cor}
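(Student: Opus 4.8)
The plan is to deduce the corollary from Theorem \ref{thm:Fuji} combined with the variational characterization of condition \eqref{eq:theptu} already recorded in the introduction. First I would observe that in the definition \eqref{eq:sdef} of $s_{\mathbb{D}}$ the numerator $|z_1-z_2|$ is independent of the boundary point, so taking the supremum over $z\in\partial\mathbb{D}$ is the same as minimizing the denominator $D(z)=|z_1-z|+|z-z_2|$ over $\partial\mathbb{D}$. Writing $z=e^{it}$, this is exactly the minimization of the sum-of-distances function $t\mapsto |z_1-e^{it}|+|z_2-e^{it}|$ discussed after \eqref{eq:theptu}.

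Next I would invoke compactness. Since $\partial\mathbb{D}$ is compact and $D$ is continuous and strictly positive (because $z_1,z_2\in\mathbb{D}$ are interior points, so $z\neq z_1$ and $z\neq z_2$ for every $z\in\partial\mathbb{D}$), the function $D$ attains a global minimum at some point $z_0=e^{it_0}$. At this minimizer the derivative of $t\mapsto D(e^{it})$ vanishes, so $t_0$ is a critical point of the sum-of-distances function. By the equivalence stated in the introduction---that $u=e^{it_0}$ satisfies \eqref{eq:theptu} if and only if $t_0$ is such a critical point---the minimizer $z_0$ satisfies the reflection condition \eqref{eq:theptu}.

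Then Theorem \ref{thm:Fuji} applies directly: every point of $\partial\mathbb{D}$ satisfying \eqref{eq:theptu} is a root of the quartic \eqref{eq:equation}. Hence $z_0$ is one of the roots of \eqref{eq:equation} lying on the unit circle. To single out $z_0$ among these roots I would use that it was chosen as the global minimizer of $D$ on $\partial\mathbb{D}$; by the very definition of the \emph{minimizing root} of \eqref{eq:equation} this means $z_0=u$. Substituting $u=z_0$ into $s_{\mathbb{D}}(z_1,z_2)=|z_1-z_2|/D(z_0)$ then yields the asserted formula.

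The main obstacle, and the only genuinely delicate point, is this final selection step. The passage from \eqref{eq:theptu} to \eqref{eq:equation} is only a necessary condition, so the quartic can carry several roots on $\partial\mathbb{D}$---indeed the introduction notes that the sum-of-distances function has at least a minimum and a maximum among its critical points, producing at least two boundary solutions---as well as spurious roots off the circle that are irrelevant to the optics problem. I would therefore emphasize that one cannot read $s_{\mathbb{D}}$ off an arbitrary root: the correct value comes precisely from the root that minimizes $D$, which is exactly why the corollary is stated in terms of the minimizing root rather than any solution of \eqref{eq:equation}.
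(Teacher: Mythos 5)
Your argument is correct and follows essentially the same route the paper takes: the paper gives no separate proof of the corollary, treating it as an immediate consequence of Theorem \ref{thm:Fuji} together with the observations in the introduction that the supremum in \eqref{eq:sdef} is attained, that the extremal boundary point satisfies \eqref{eq:theptu} (equivalently, is a critical point of the sum-of-distances function), and that one must then select the unimodular root minimizing $|z_1-u|+|z_2-u|$. Your added care about the final selection step (the global minimizer over $\partial\mathbb{D}$ is a unimodular root and hence a fortiori minimizes the sum among the unimodular roots, so $D(z_0)=D(u)$ even if the minimizing root is not unique) is exactly the point the paper flags informally in the remarks surrounding the definition of the minimizing root.
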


As we will see below, the minimizing root need not be unique.

We have used Risa/Asir symbolic computation software \cite{nst} in the
proofs of our results. We give a short Mathematica code for the computation
of $s_{\mathbb{D}}(z_1, z_2)\,.$

%We also study the Barrlund metric $b_{G,p}\,$ and %compare it to $s_G\,.$ For
%the cases of a ball or a half-plane we give an %explicit formula for it when $%
%p=2\,.$ Finally, we discuss also a geometric approach to the Ptolemy-Alhazen
%problem.

%  We will later make this theorem using the theorem environment \ref{Fujimurathm}.
Theorem \ref{thm:Fuji} is applicable not merely to light signals but
whenever the angles of incidence and reflection of a wave or signal are
equal, for instance in the case of electromagnetic signals like radar
signals or acoustic waves. H. Bach \cite{ba} has made numerical studies of Alhazen's
ray-tracing problem related to circles and ellipses. A.R. Miller and E. Vegh
\cite{mv} have studied the Ptolemy-Alhazen problem in terms of quartic
equations. However, their quartic equation is not the same as %
\eqref{eq:equation}. Mathematical theory of billiards also leads to similar
studies: see for instance the paper by M. Drexler and M.J. Gander \cite{dg}.
The Ptolemy-Alhazen problem also occurs in computer graphics and catadioptric optics
 \cite{atr}. The well-known lithograph of M. C. Escher named \href{https://en.wikipedia.org/wiki/Hand_with_Reflecting_Sphere}{"\underline{Hand with reflecting sphere}"} demonstrates nicely the idea of catadioptric optics.
%%%%%%%%%%%%%%%%%%%%%%%%%%%%%%%%%%%%%
%%%%%%%%%%%%%%%%%%%%%%%%%%%%%%%%%%%%%
%%%%%%%%%%%%%%%%%%%%%%%%%%%%%%%%%%%%%

\section{Algebraic solution to the Ptolemy-Alhazen problem}

\label{section2}

%%%%%%%%%%%%%%%%%%%%%%%%%%%%%%%%%%%%%
%%%%%%%%%%%%%%%%%%%%%%%%%%%%%%%%%%%%%
%%%%%%%%%%%%%%%%%%%%%%%%%%%%%%%%%%%%%
In this section we prove Theorem \ref{thm:Fuji} and give an algorithm for computing $s_{\mathbb{D}}(z_1,z_2)$ for $z_1,z_2\in \mathbb{D}\,$.
%%%%%%%%%%%%%% Fujimura %%%%%%%%%%%%%%%%

\begin{problem}
  For $z_1,z_2\in\mathbb{D} $, find the point $u \in\partial\mathbb{D} $ such
  that the sum $|u-z_1|+|u-z_2| $ is minimal.
\end{problem}

The point $u $ is given as the point of tangency of an ellipse $%
|z-z_1|+|z-z_2|=r $ with the unit circle.

%%%%%%%%%%%%%%%%%%%%%%%%%%%%%%

\begin{rem}\label{lem:myLemma1}
  For $z_1,z_2\in\mathbb{D} $, if $u\in\partial\mathbb{D} $ is the point of
  tangency of an ellipse $|z-z_1|+|z-z_2|=r $ and the unit circle, then $r $
  is given by
  \begin{equation*}
    r=|2-\overline{u}z_1-u\overline{z_2}|\,.
  \end{equation*}
  In fact, from the ``reflective property''
  $\measuredangle (z_{1},u,0)=\measuredangle (0,u,z_{2})$
  of an ellipse,  the following holds
  \begin{equation}\label{eq:hansha}
    \arg\dfrac{u}{u-z_1}=\arg\dfrac{u-z_2}{u}
     =-\arg\dfrac{\overline{u}-\overline{z_2}}{\overline{u}},
  \end{equation}
  and
  \begin{equation}\label{eq:arg}
    \arg(\overline{u}(u-z_1))=\arg(u(\overline{u}-\overline{z_2})).
  \end{equation}
  Since the point $ u $ is on the ellipse $|z-z_1|+|z-z_2|=r $ and
  satisfies $u\overline{u}=1 $, we have
  $$
      r = |u-z_1|+|u-z_2|
        = |\overline{u}(u-z_1)|+|u(\overline{u}-\overline{z_2})|
        = |\overline{u}(u-z_1)+u(\overline{u}-\overline{z_2})|
        =|2-\overline{u}z_1-u\overline{z_2}|\,.
  $$
\end{rem}

%%%%%%%%%%%%%%%%%%%%%%%%%%%%%%

\noindent
\begin{nonsec}
  {\textit{Proof of Theorem \ref{thm:Fuji}.}} 
\end{nonsec}

%%%%%%%%%%%%%%%%%%%%%%%%%%%%%%

%
From the equation \eqref{eq:hansha}, we have
  $$
    \arg \Big(\frac{u-z_1}{u}\cdot\frac{u-z_2}{u}\Big)=0.
  $$
This implies $ \dfrac{(u-z_1)(u-z_2)}{u^2} $ is real and its complex
conjugate is also real.
Hence,
$$
  \frac{(u-z_1)(u-z_2)}{u^2}
  =\frac{(\overline{u}-\overline{z_1})(\overline{u}-\overline{z_2})}
  {\overline{u^2}}
$$
holds.
Since $ u $ satisfies $ u\overline{u}=1 $, we have the assertion.
\hfill{$\square$}
%%%%%%%%%%%%%%%%%%%%%%%%%%%%%%%%%

\begin{rem}
  The solution of \eqref{eq:equation} includes all the tangent points of the
  ellipse $|z-z_1|+|z-z_2|=|2-\overline{u}z_1-u\overline{z_2}| $ and the unit
  circle. (See Figures \ref{fig:fig1}, \ref{fig:myfig2}.)
  . Figure \ref{fig:myfig2} displays a situation
  where all the roots of
  the quartic equation have unit modulus. However, this is not always the case
  for the equation  \eqref{eq:equation}. E.g. if
  $z_1=0.5+(0.1 \cdot k ) i, k=1,..,5, \, z_2= 0.5,$
  the equation \eqref{eq:equation}  has two roots of modulus
  equal to $1$ and two roots off the unit circle.  Miller and Vegh \cite{mv}
  have also studied the Ptolemy-Alhazen  problem using a quartic equation,
  that is different from our equation and, moreover, all the roots of their
  equation have modulus equal to one.

% On the other hand, the equation \eqref{eq:equation} has the following
%invariance property: if $u\neq 0$ is a solution %of \eqref{eq:equation},
%then so is $1/\overline{u}\,,$ as is easily checked.
\end{rem}

% Table[   {x = 0.5;  y = 0.5 + I*0.1*jj; {y, If[Abs[y] < 1, 1, 0]* Abs[sBRoots[x, y]  [[2]]]}}, {jj, 1, 5}]
% {{{0.5 + 0.1 I, {1., 0.272527, 1., 3.66936}}}, {{0.5 + 0.2 I, {1.,  0.285542, 1., 3.50211}}},
%{{0.5 + 0.3 I, {1., 0.305076, 1.,   3.27788}}}, {{0.5 + 0.4 I,
% {1., 0.328393, 1., 3.04513}}}, {{0.5 +    0.5 I, {1., 0.352165, 1., 2.83958}}}}

We say that a polynomial $P(z)$ is \textit{self-inversive} if
$P(1/\overline{u})= 0$ whenever $u \neq 0$ and $P(u)=0\,.$
It is easily seen that the quartic polynomial in \eqref{eq:equation}
is self-inversive. Note that the points $u$ and  $1/\overline{u}$
are obtained from each other by the
inversion transformation $w \mapsto 1/\overline{w} \,.$

\begin{lem} \label{lem:mylem28}
  The equation \eqref{eq:equation} always has at least two
  roots of modulus equal to $1\,.$
\end{lem}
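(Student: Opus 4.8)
The plan is to exploit the self-inversive structure of the quartic, which has just been established. Writing $P(u)=\overline{z_1}\,\overline{z_2}\,u^4-(\overline{z_1}+\overline{z_2})u^3+(z_1+z_2)u-z_1z_2$, I would first record the reciprocal relation
\[
  u^4\,\overline{P(1/\overline u)}=-P(u)\,,
\]
which shows that the roots of $P$ come in pairs $\{u,1/\overline u\}$ obtained from one another by inversion $w\mapsto 1/\overline w$ in the unit circle. A root of modulus $1$ is precisely a fixed point of this inversion, so the assertion amounts to showing that at least one such pair collapses to a single point on $\partial\mathbb{D}$. The four roots therefore split into inversion-pairs, and the whole question is whether at least two of them are self-paired.

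The cleanest route is a continuity/degree argument rather than a direct factorization. First I would verify that $u=0$ is never a root (the constant term is $-z_1z_2$, which could vanish, so I would treat the case $z_1z_2=0$ separately, where the equation degenerates and the roots can be read off directly). With $u\neq 0$ established generically, I would then consider the product of the four roots, which by Vi\`ete equals $-z_1z_2/(\overline{z_1}\,\overline{z_2})$, a number of modulus $1$ since $|z_1z_2|=|\overline{z_1}\,\overline{z_2}|$. Because the roots pair up as $\{u_j,1/\overline{u_j}\}$ with $|u_j|\cdot|1/\overline{u_j}|=1$, each pair contributes modulus $1$ to the product automatically, so this is consistent but not yet decisive; the real content must come from counting how many roots can lie strictly inside versus strictly outside $\partial\mathbb{D}$.

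The main idea, then, is that off-circle roots must come in pairs: if $|u_0|<1$ is a root, then its partner $1/\overline{u_0}$ satisfies $|1/\overline{u_0}|>1$ and is also a root, and vice versa. Hence the roots strictly off the unit circle occur in matched interior/exterior pairs, so their total number is even. Since $P$ has exactly four roots counted with multiplicity, the number lying strictly off the circle is $0$, $2$, or $4$; in each case the number of roots \emph{on} the circle is $4$, $2$, or $0$ respectively. To rule out the last case I would use the variational origin of the problem: by the compactness argument in the introduction, the function $t\mapsto|z_1-e^{it}|+|z_2-e^{it}|$ on the circle attains both a minimum and a maximum, and by Remark~\ref{lem:myLemma1} together with the derivation in the proof of Theorem~\ref{thm:Fuji}, each such extremum point satisfies \eqref{eq:theptu} and hence is a genuine root of \eqref{eq:equation} lying on $\partial\mathbb{D}$. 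This produces at least two distinct on-circle roots (a minimizer and a maximizer), eliminating the all-off-circle case and giving the claim.

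I expect the main obstacle to be the degenerate situations where the geometric extremum argument is delicate: when $z_1=z_2$ the minimizer and maximizer of the sum could coincide or become non-unique, and when $z_1z_2=0$ or when one focus sits at the center the quartic drops degree or acquires a root at $0$. The honest fix is to handle these boundary cases separately by direct inspection of \eqref{eq:equation}, and to phrase the extremum argument so that ``at least two roots of modulus one'' is counted with multiplicity, so that a coincident min/max still supplies a double root on the circle. Apart from this bookkeeping, the parity argument for off-circle roots together with the existence of the two geometric extrema is what carries the proof.
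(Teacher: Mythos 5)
Your proposal is correct in substance, and it shares with the paper the first half of the argument: the quartic is self-inversive, so its roots split into inversion-pairs $\{u,1/\overline{u}\}$ and the number of roots off the unit circle is even. Where you genuinely diverge is in ruling out the case of \emph{four} off-circle roots. The paper does this purely algebraically: writing the hypothetical roots as $ae^{i\alpha},\ \tfrac1a e^{i\alpha},\ be^{i\beta},\ \tfrac1b e^{i\beta}$ with $0<a,b<1$, it uses the fact that the $u^2$-coefficient of \eqref{eq:equation} is zero to obtain the identity \eqref{eq:zero}, whose left side has modulus at most $2$ while its right side has modulus $\bigl(a+\tfrac1a\bigr)\bigl(b+\tfrac1b\bigr)>4$ — a contradiction. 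You instead invoke the variational origin of the problem: the minimum and maximum of $t\mapsto|z_1-e^{it}|+|z_2-e^{it}|$ are critical points, hence satisfy \eqref{eq:theptu}, hence (by the computation in the proof of Theorem \ref{thm:Fuji}) are unimodular roots of \eqref{eq:equation}. Both routes work. The paper's argument is self-contained, needs no case analysis beyond $z_1z_2=0$, and covers arbitrary $z_1,z_2$ and multiplicities in one stroke; yours, when it applies, yields the slightly stronger conclusion that there are two \emph{distinct} unimodular roots, and it explains geometrically why they exist.

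The one point you should shore up is that your decisive step rests on the assertion from the introduction that $e^{it_0}$ satisfies \eqref{eq:theptu} if and only if $t_0$ is a critical point of the distance-sum — the paper states this with ``one proves that'' and never proves it, so in a self-contained proof you would have to supply the differentiation. Related to this, the extremum argument requires the distance-sum to be differentiable at the extremizers (it has corners if $z_1$ or $z_2$ lies on $\partial\mathbb{D}$, and the lemma as stated does not restrict $z_1,z_2$ to the disk) and to be non-constant (it is constant only for $z_1=z_2=0$). You correctly anticipate that these degenerate situations need separate treatment, and your parity argument does rescue the count (an even number of unimodular roots that is at least one must be at least two), so these are repairs rather than fatal flaws; the coefficient identity \eqref{eq:zero} is precisely the device the authors use to avoid all of this bookkeeping.
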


\begin{proof}
  Consider first the case, when $z_1 z_2 =0 \,. $ In this case the
  equation \eqref{eq:equation} has two roots $u, |u|=1,$ with
  $u^2={z_1}/{\overline{z_1}}\in\partial\mathbb{D} $
  if $z_2=0, z_1\neq 0\,. $ (The case $z_1=z_2=0 $ is trivial.)
  Suppose that the equation has no root on the unit circle
  $\partial\mathbb{D} \,.$

  By the invariance property pointed out above, if $u_0\in \mathbb{C}
  \setminus ( \{0\} \cup \partial\mathbb{D}) $ is a root of
  \eqref{eq:equation}\,, then ${1}/{\overline{u_0}} $ also is a root
  of \eqref{eq:equation}.
  Hence the number of roots off the unit circle is even and the number of
  roots on the unit circle must also be even. We will now show that  this even
  number is either $2$ or $4\,.$

  Let $a,b,\alpha,\beta\in\mathbb{R}, 0<a<1,0<b<1\,,$ and let
  \begin{equation*}
    ae^{i\alpha},\ \frac{1}{a}e^{i\alpha},  \ be^{i\beta},\
   \frac{1}{b}e^{i\beta}
  \end{equation*}
  be the four roots of the equation \eqref{eq:equation}\,.
  Then, the equation
  \begin{equation}  \label{eq:1}
    z_1z_2(u-ae^{i\alpha})(u-\frac{1}{a}e^{i\alpha})
          (u-be^{i\beta})(u-\frac{1}{b}e^{i\beta})=0
  \end{equation}
  coincides with \eqref{eq:equation}.
  Therefore, the coefficient of degree 2 of \eqref{eq:1} vanishes,
  and we have
  \begin{equation}  \label{eq:zero}
    e^{i2\alpha}+e^{i2\beta}
      =-\Big(a+\frac1a\Big)\Big(b+\frac1b\Big)e^{i(\alpha+\beta)}.
  \end{equation}
  The absolute value of the left hand side of \eqref{eq:zero} satisfies
  \begin{equation}  \label{eq:left}
    |e^{i2\alpha}+e^{i2\beta}|\leq 2\,.
  \end{equation}
  On the other hand, the absolute value of the right hand side of %
  \eqref{eq:zero} satisfies
  \begin{equation}  \label{eq:right}
    \Big|(a+\frac1a)(b+\frac1b)e^{i2(\alpha+\beta)}\Big|
      =\Big|a+\frac1a\Big|\Big|b+\frac1b\Big|>4\,,
  \end{equation}
  because the function $f(x)=x+\frac1x $ is monotonically decreasing on
  $0 <x\leq 1 $ and $f(1)=2 $. The inequalities \eqref{eq:left} and %
  \eqref{eq:right} imply that the equality \eqref{eq:zero} never holds.
  Hence \eqref{eq:equation} has roots of modulus equals to $1.\,$
\end{proof}   % End of proof of Lemma 2.8

\begin{figure}[htbp]
  \centerline{\includegraphics[width=0.4\linewidth]{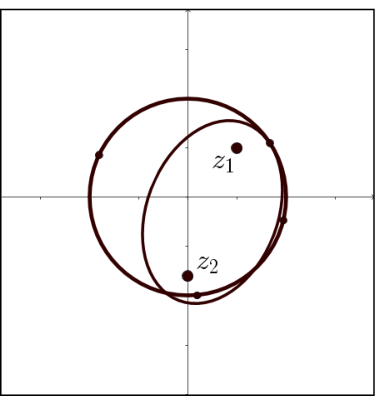}\
              \includegraphics[width=0.4\linewidth]{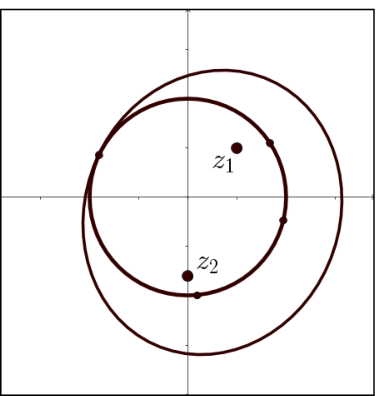}
             }
  \centerline{\includegraphics[width=0.4\linewidth]{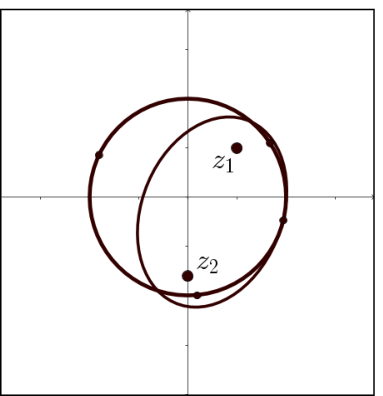}\
              \includegraphics[width=0.4\linewidth]{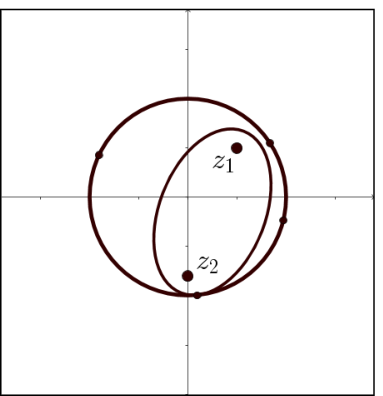}
             }
  \caption{This figure indicates the four solutions of \eqref{eq:equation}
           (dots on the unit circle) and the ellipse that corresponds
           to each $u $, for $ z_1=0.5+0.5i,z_2=-0.8i $.
           The figure on the lower right shows the point $u $
           that gives the minimum. }
  \label{fig:myfig2}
\end{figure}

%\begin{figure}[htbp]
%  \includegraphics[width=0.4\linewidth]{cb1.eps}\
%  \includegraphics[width=0.4\linewidth]{cb2.eps}\newline
%  \includegraphics[width=0.4\linewidth]{cb3.eps}\
%  \includegraphics[width=0.4\linewidth]{cb4.eps}\
%  \caption{For $ z_1=0.5$ and $z_2=-0.23$, there are only two solutions of
%          \eqref{eq:equation} on the unit circle.
%          The down right-hand side figure shows $u $ which gives the minimum.}
%\end{figure}

\begin{figure}[htbp]
  \centerline{\includegraphics[width=0.4\linewidth]{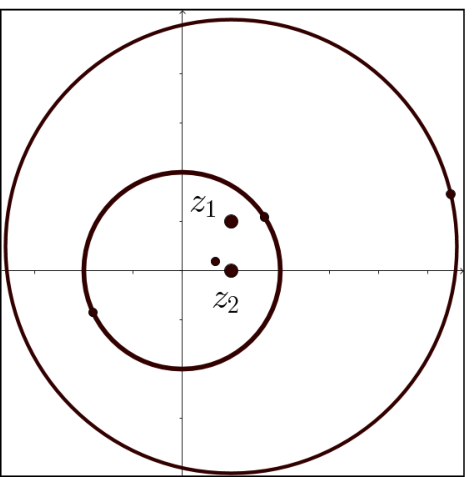}\
              \includegraphics[width=0.4\linewidth]{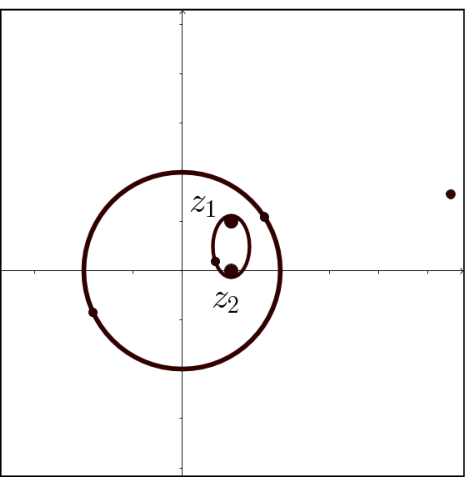}
             }
  \centerline{\includegraphics[width=0.4\linewidth]{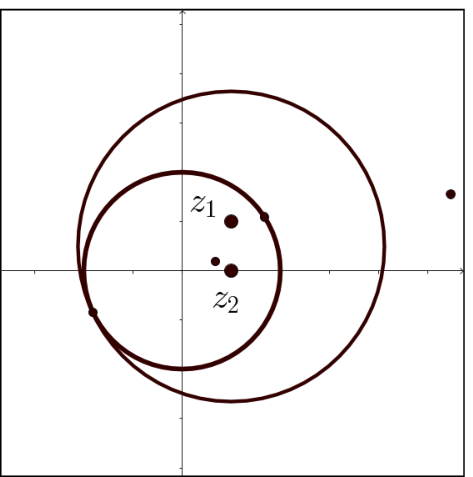}\
              \includegraphics[width=0.4\linewidth]{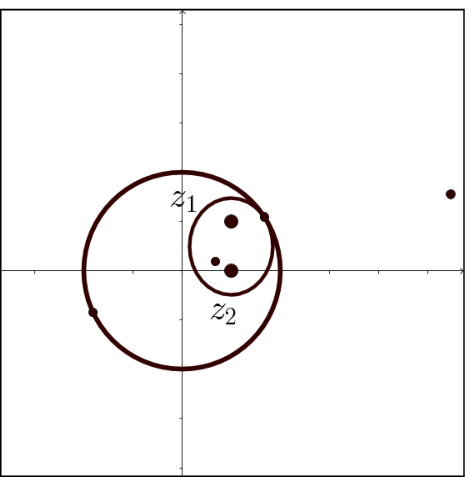}
             }
  \caption{For $z_1=0.5+0.5i$ and $z_2= 0.5$, there are only two solutions of
           \eqref{eq:equation} on the unit circle.
           The figure on the lower right shows the point $u $
           that gives the minimum.}
  \label{fig:fig3}
\end{figure}

%%%%%%%%%%%%%%%%%%%%%%%%%%%%%Special cases

\begin{rem}
  We consider here several special cases of the equation
  \eqref{eq:equation} and for some special cases we give the corresponding
  formula for the $s_{\mathbb{D}}$ metric which readily follows from
  Corollary \ref{cor:sDformula}.
  \begin{enumerate}[{\bf {Case} 1.}]
    \item \label{item:case1}
           $z_{1}\neq 0=z_{2}$ (cubic equation). The equation
           \eqref{eq:equation} is now
           $\left( -\overline{z_{1}}\right) u^{3}+z_{1}u=0$
           and has the roots $u_{1}=0$,
           $u_{2,3}=\pm \frac{z_{1}}{\left\vert z_{1}\right\vert }$ and
           for $z \in {\mathbb{D}}$
           \begin{equation*}
            s_{\mathbb{D}}(0,z) = \frac{|z|}{2 - |z|}\,.
           \end{equation*}
    \item \label{item:case2}
           $z_{1}+z_{2}=0$, $z_{1}\neq 0$. The equation
           \eqref{eq:equation} reduces now to:
           \begin{equation*}
             \left( -\overline{z_{1}}^{2}\right) u^{4}+z_{1}^{2}
              =0\Leftrightarrow u^{4}
              =\left( \frac{z_{1}}{\overline{z_{1}}}\right) ^{2}\Leftrightarrow u^{4}
              =\left( \frac{z_{1}}{\left\vert z_{1}\right\vert }\right) ^{4}\,.
           \end{equation*}

           \noindent The roots are:
           $u_{1,2}=\pm \frac{z_{1}}{\left\vert z_{1}\right\vert }$,
           $u_{3,4}=\pm i\frac{z_{1}}{\left\vert z_{1}\right\vert }$
           (four distinct roots of modulus $1$) and for $z \in {\mathbb{D}}$
           \begin{equation*}
              s_{\mathbb{D}}(z,-z) = |z|\,.
           \end{equation*}
    \item \label{item:case3}
           $z_{1}=z_{2}\neq 0\,.$ Clearly $s_{\mathbb{D}}(z,z)=0\,.$
           Denote $z:=z_{1}=z_{2}$. The equation \eqref{eq:equation}
           reduces now to:
           \begin{equation*}
             \overline{z}^{2}u^{4}-2\overline{z}u^{3}+2zu-z^{2}
               = (\overline{z} u^2 -z)(\overline{z} u^2-2u +z)= 0\,.
           \end{equation*}

           \noindent %Using a geometric interpretation we find the roots
           Then we see that $u_{1,2}=\pm \frac{z}{\left\vert z\right\vert }$
           are roots.
%(Let $P\left( u\right) =\overline{z}^{2}u^{4}-2\overline{z}u^{3}+2zu-z^{2}$.
%Then $$P\left( \pm \frac{z}{\left\vert z\right\vert }\right) =\overline{z}^{2}%
%\frac{z^{4}}{z^{2}\overline{z}^{2}}\mp 2\overline{z}\frac{z}{\left\vert
%z\right\vert }\frac{z^{2}}{z\overline{z}}\pm 2z\frac{z}{\left\vert
%z\right\vert }-z^{2}=z^{2}\mp 2\frac{z^{2}}{\left\vert z\right\vert }\pm 2%
%\frac{z^{2}}{\left\vert z\right\vert }-z^{2}=0 \,. )  $$

\noindent
%Dividing $P\left( u\right) $ by $\left( u-u_{1}\right) \left( u-u_{2}\right)
%$ we get $$P\left( u\right) =\overline{z}^{2}\left( u-u_{1}\right) \left(
%u-u_{2}\right) \left( u^{2}-\frac{2}{\overline{z}}u+\frac{z}{\overline{z}}%
%\right) \,.$$

          \noindent The other roots are:
          \begin{enumerate}[1)]
             \item  If $\left\vert z\right\vert <1$,
                   then $u_{3,4}=\frac{1}{\overline{z}}%
                   \left( 1\pm \sqrt{1-\left\vert z\right\vert ^{2}}\right) $
                   (with $\left\vert u_{3}\right\vert >1$,
                    $\left\vert u_{4}\right\vert <1$)
             \item  If $\left\vert z\right\vert >1$,
                    then $u_{3,4}=\frac{1}{\overline{z}}%
                   \left( 1\pm i\sqrt{\left\vert z\right\vert ^{2}-1}\right) $
                   (with $\left\vert u_{3}\right\vert =\left\vert u_{4}\right\vert =1$).
          \end{enumerate}
    \item \label{item:case4}
          $\left\vert z_{1}\right\vert =\left\vert z_{2}\right\vert\neq 0\,.$

          \noindent Denote $\rho =\left\vert z_{1}\right\vert =\left\vert
          z_{2}\right\vert $. Using a rotation around the origin and a change of
          orientation we may assume that $\arg z_{2}=-\arg z_{1}=:\alpha $,
          where $0\leq \alpha \leq \frac{\pi }{2}$\,.

          \noindent The equation \eqref{eq:equation} reads now:
          $\rho ^{2}u^{4}-2\rho\left( \cos \alpha \right) u^{3}+2\rho
          \left( \cos \alpha \right) u-\rho^{2}=0$

          $\rho ^{2}u^{4}-2\rho \left( \cos \alpha \right) u^{3}
           +2\rho \left( \cos\alpha \right) u-\rho ^{2}
           =\rho ^{2}\left( u^{2}-1\right) \left( u^{2}-\frac{%
           2\cos \alpha }{\rho }u+1\right) $

          \noindent The roots are: $u_{1,2}=\pm 1$ and
          \begin{enumerate}[1)]
             \item  If $0<\rho <\cos \alpha $,
                    then $u_{3,4}=\frac{\cos \alpha }{\rho }\pm
                    \sqrt{\left( \frac{\cos \alpha }{\rho }\right) ^{2}-1}$
                   (here $\left\vert u_{3}\right\vert >1$,
                   $\left\vert u_{4}\right\vert <1$)
             \item If $\rho \geq \cos \alpha $,
                   then $u_{3,4}=\frac{\cos \alpha }{\rho }\pm i
                   \sqrt{1-\left( \frac{\cos \alpha }{\rho }\right) ^{2}}$
                   (here $\left\vert u_{3}\right\vert =\left\vert u_{4}\right\vert =1$).
          \end{enumerate}
          \noindent Note that Case \ref{item:case4} includes Cases
          \ref{item:case2} and \ref{item:case3}
          (for $\alpha =\frac{\pi }{2}$, respectively $\alpha =0$)\,.

    \item \label{item:case5}
          $z_{1}=tz_{2}$ ($t\in \mathbb{R}$, $z_{2}\neq 0$). This
          case  is generalization of cases $z_{1}=0\neq z_{2}$,
          $z_{1}+z_{2}=0$, $z_{1}\neq 0$ and $z_{1}=z_{2}\neq 0$.

          Denote $P\left( u\right)
          =\overline{z_{1}z_{2}}u^{4}-\left( \overline{z_{1}}+
          \overline{z_{2}}\right) u^{3}+\left( z_{1}+z_{2}\right) u-z_{1}z_{2}$.

          Denoting $z_{2}=z$ we have:

         \begin{align*}
          P(u)
            =& t\overline{z}^{2}u^{4}-\left( 1+t\right) \overline{z}
              u^{3}+\left(1+t\right) zu-tz^{2} \\
           = & t\overline{z}^{2}\left( u^{4}
               -\frac{z^{4}}{\left\vert z\right\vert ^{4}}\right)
               - \left( 1+t\right) \overline{z}u\left( u^{2}
               -\frac{z^{2}}{\left\vert z\right\vert ^{2}}\right) \,.
        \end{align*}
        \begin{equation*}
           P(u)=\overline{z}\left( u-\frac{z}{\left\vert z\right\vert }\right)
              \left( u+\frac{z}{\left\vert z\right\vert }\right)
              \left( t\overline{z}u^{2}-\left(1+t\right) u+tz\right)
        \end{equation*}

        For $t=0$ the roots of $P$ are $0,\pm \frac{z}{\left\vert z\right\vert }$.

        Let $t\neq 0$. Besides $\pm \frac{z}{\left\vert z\right\vert }$ there are
        two roots, which have modulus $1$ if and only if $\left\vert z\right\vert
        \geq \left\vert \frac{1+t}{2t}\right\vert $\,.
  \end{enumerate}
\end{rem}

%%%%%%%%%%%%%%%%%%%%%%%%%%%%%%%%%%%%%%%%%%%%%%%%%%%%%%%%%++++++++++++++++++++++++++++++++++++++
%\end{document}

\begin{nonsec}
  \textbf{Exterior Problem.}
  Given $z_1,z_2\in\mathbb{C}\setminus\overline{\mathbb{D}} $,
  find the point $u \in \partial \mathbb{D} $ such that the sum
  $|z_1-u|+|u-z_2| $ is minimal.
\end{nonsec}

%%%%%%%%%%%%%%%%%%%%%%%%%%%%

\begin{lem}
  If the segment $[z_1,z_2 ]$ does not intersect with $\partial\mathbb{D} $,
  the point $u $ is given as a solution of the equation
  \begin{equation*}
    \overline{z_1}\overline{z_2}u^4-(\overline{z_1}+\overline{z_2})u^3
    +(z_1+z_2)u-z_1z_2=0\,.
  \end{equation*}
\end{lem}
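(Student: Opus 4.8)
The plan is to reduce everything to the algebra already carried out in the proof of Theorem \ref{thm:Fuji}. The only fact that proof really uses is that the quantity $(u-z_1)(u-z_2)/u^2$ is real: once this is known, equating it to its conjugate and substituting $u\overline{u}=1$ produces exactly the self-inversive quartic \eqref{eq:equation}. So I would structure the argument so that its sole objective is to establish
$$
  \frac{(u-z_1)(u-z_2)}{u^2}\in\mathbb{R}
$$
at the minimizing point $u$, and then quote the last two lines of the proof of Theorem \ref{thm:Fuji} verbatim. In this way the exterior statement becomes formally identical to the interior one; the work is entirely in locating $u$.

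To reach that realness, I would first invoke compactness: $g(t)=|z_1-e^{it}|+|z_2-e^{it}|$ attains its minimum at some $u=e^{it_0}$, and because $z_1,z_2\notin\overline{\mathbb{D}}$ the point $u$ differs from both foci, so $g$ is smooth near $t_0$ and $g'(t_0)=0$. Differentiating $|z_j-e^{it}|^2=|z_j|^2+1-2\Re(\overline{z_j}e^{it})$ and simplifying with the identities $(u-z_j)/u=1-z_j\overline{u}$ and $|u-z_j|=|1-z_j\overline{u}|$, valid on $\partial\mathbb{D}$, should give
$$
  g'(t)=\sin\theta_1+\sin\theta_2,\qquad \theta_j:=\arg\frac{u-z_j}{u}.
$$
The sign bookkeeping here is routine and I would check it, but it is not the difficulty.

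The decisive step, and the one I expect to be the main obstacle, is passing from $g'(t_0)=0$ to a genuine reflection rather than to a spurious stationary configuration. Factoring $\sin\theta_1+\sin\theta_2=2\sin\tfrac{\theta_1+\theta_2}{2}\cos\tfrac{\theta_1-\theta_2}{2}$, the condition $g'(t_0)=0$ forces either $\theta_1+\theta_2\equiv 0\pmod{2\pi}$ or $\theta_1-\theta_2\equiv\pi\pmod{2\pi}$. The first alternative is precisely $\arg\big((u-z_1)(u-z_2)/u^2\big)=0$, i.e. the reflection condition \eqref{eq:theptu}--\eqref{eq:hansha}, which yields the desired realness. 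The second alternative says that $(u-z_1)/(u-z_2)$ is a negative real number, i.e. that $u$ lies strictly between $z_1$ and $z_2$, placing $u\in[z_1,z_2]\cap\partial\mathbb{D}$. This is exactly where the hypothesis that $[z_1,z_2]$ does not meet $\partial\mathbb{D}$ does its work: it eliminates the second alternative outright, leaving only the reflection case. (The same mechanism is what makes the interior statement automatic, since there $[z_1,z_2]\subset\mathbb{D}$ never touches the boundary.)

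So the genuine obstacle is conceptual rather than computational: recognizing that the non-intersection hypothesis is precisely the condition forbidding the minimizer from being a point at which $z_1,u,z_2$ are collinear with $u$ between the foci. After it is used to discard that degenerate branch, the realness of $(u-z_1)(u-z_2)/u^2$ holds and the quartic \eqref{eq:equation} follows by the computation in the proof of Theorem \ref{thm:Fuji}. A minor loose end I would also dispose of is the boundary value $\theta_1+\theta_2=2\pi$ (both $\theta_j=\pi$): this still makes the ratio real (indeed positive), so it causes no trouble.
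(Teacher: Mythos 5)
Your argument is correct, and it is worth saying up front that the paper does not actually prove this lemma: it is stated without proof, and the remark that follows it only observes that the algebraic derivation in Theorem \ref{thm:Fuji} (reflection condition $\Rightarrow$ realness of $(u-z_1)(u-z_2)/u^2$ $\Rightarrow$ quartic \eqref{eq:equation}) never uses $z_1,z_2\in\mathbb{D}$. What the paper leaves implicit --- resting on the Fermat's-principle discussion in the introduction, where it is asserted that $u=e^{it_0}$ satisfies \eqref{eq:theptu} if and only if $t_0$ is a critical point of $t\mapsto|z_1-e^{it}|+|z_2-e^{it}|$ --- is precisely the step you supply: that the \emph{minimizer} of the sum of distances actually satisfies the reflection condition \eqref{eq:hansha}. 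Your computation $g'(t)=\sin\theta_1+\sin\theta_2$ with $\theta_j=\arg\bigl((u-z_j)/u\bigr)$ checks out, and the factorization into the two alternatives $\theta_1+\theta_2\equiv 0\pmod{2\pi}$ (reflection, hence realness and the quartic) versus $\theta_1-\theta_2\equiv\pi\pmod{2\pi}$ (i.e.\ $u\in(z_1,z_2)$) is the honest version of the introduction's ``if and only if'': the second branch is a genuine obstruction in the exterior setting, and the hypothesis that $[z_1,z_2]\cap\partial\mathbb{D}=\emptyset$ exists exactly to kill it --- a point the paper never articulates. So your route is not so much different from the paper's as it is a completion of it; what it buys is an actual explanation of the role of the segment hypothesis, and a correct handling of the degenerate value $\theta_1=\theta_2=\pi$, at the modest cost of a short calculus computation that the paper waves at but does not perform.
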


\begin{rem}
  The above equation coincides with the equation \eqref{eq:equation} for the
  ``interior problem'', since Theorem \ref{thm:Fuji} could be proved
  without using the assumption $ z_1,z_2\in\mathbb{D} $.
\end{rem}
%%%%%%%%%%%%%%%%%

\begin{rem}
  The equation of the line joining two points $z_1 $ and $z_2 $ is given by
  \begin{equation}  \label{eq:line}
    \frac{z_1-z}{z_2-z}
       =\frac{\overline{z_1}-\overline{z}}{\overline{z_2}-\overline{z}}\,.
  \end{equation}
  Then, the distance from the origin to this line is
  \begin{equation*}
    \frac{|\overline{z_1}z_2-z_1\overline{z_2}|}{2|z_1-z_2|}.
  \end{equation*}
  Therefore, if two points $z_1,z_2 $ satisfy
  $\dfrac{|\overline{z_1}z_2-z_1\overline{z_2}|}{2|z_1-z_2|}\leq 1$\,,
  the line \eqref{eq:line} intersects
  with the unit circle, and the triangular ratio metric
  $s_{\mathbb{D}}(z_1,z_2)=1 $\,.
\end{rem}

\begin{lem} \label{lem:algcur}
  The boundary of
  $B_s(z,t) =\{ w \in \mathbb{D}: s_{\mathbb{D}}(z,w)<t \} $
  is included in an algebraic curve.
\end{lem}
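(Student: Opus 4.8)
The plan is to exhibit the level set $\{w\in\mathbb{D}:s_{\mathbb{D}}(z,w)=t\}$, which is $\partial B_s(z,t)$ inside $\mathbb{D}$ for a fixed centre $z\in\mathbb{D}$ and a fixed radius $t\in(0,1)$, as the zero set of a single polynomial in the real coordinates of $w$. Fix $z$ and $t$ and let $w=x+iy$ be the running point. By Corollary~\ref{cor:sDformula} together with Remark~\ref{lem:myLemma1}, applied to the pair $z_1=z$, $z_2=w$, a point $w$ lies on the boundary if and only if the minimizing root $u$ of \eqref{eq:equation} satisfies $|z-w|=t\,|2-\overline{u}z-u\overline{w}|$. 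Since both sides are nonnegative, I would square this identity; writing $|2-\overline{u}z-u\overline{w}|^2=(2-\overline{u}z-u\overline{w})(2-u\overline{z}-\overline{u}w)$ and then using $|u|=1$ to replace $\overline{u}$ by $1/u$ and multiplying through by $u^2$, the relation becomes a polynomial equation $B(u)=0$ of degree four in $u$ whose coefficients are polynomials in $x,y$ (with $z,\overline{z},t$ entering as constants). Thus on $\partial B_s(z,t)$ the reflection point $u$ is simultaneously a root of $B$ and of the quartic $Q$ of \eqref{eq:equation}.

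The second step is to eliminate $u$. As $Q$ and $B$ are two quartics in $u$ sharing the common root $u$ on the boundary, their resultant $R(w,\overline{w})=\operatorname{Res}_u(Q,B)$ vanishes on $\partial B_s(z,t)$. This resultant is a polynomial in $w$ and $\overline{w}$ with coefficients depending only on the fixed data $z,\overline{z},t$. Substituting $w=x+iy$, $\overline{w}=x-iy$ turns $R$ into a polynomial in the real variables $x,y$ that vanishes on the (real) boundary; consequently at least one of its real and imaginary parts is a real polynomial $P(x,y)$, not identically zero, with $\partial B_s(z,t)\subseteq\{P(x,y)=0\}$. This exhibits the boundary inside an algebraic curve, as claimed.

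The step I expect to be the main obstacle is guaranteeing that the eliminant is a genuine curve rather than the whole plane, that is, that $R\not\equiv0$. Comparing coefficients shows that $Q$ and $B$ are not proportional---for instance the coefficient of $u^2$ in $Q$ is $0$, whereas the corresponding coefficient of $B$ is $|z-w|^2-t^2(4+|z|^2+|w|^2)$, which does not vanish identically---and a direct symbolic computation (of the kind used elsewhere in this paper) confirms that $R$ is a nonzero polynomial, hence so is one of its real or imaginary parts. A second point worth recording is that selecting the minimizing root among the at most four unimodular roots of \eqref{eq:equation} is governed by an inequality and is therefore not itself algebraic; this causes no difficulty, because the minimizing root is in any case one of the roots of $Q$, so only the containment $\partial B_s(z,t)\subseteq\{P=0\}$ is required and the curve $\{P=0\}$ may legitimately be larger than the boundary. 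Finally, the portion of $\partial B_s(z,t)$ lying on $\partial\mathbb{D}$, as well as the few degenerate configurations (such as $z=0$, or values of $z,t$ at which a leading coefficient drops), are handled directly: the unit circle is the algebraic set $|w|^2=1$, and the explicit special cases computed above already display the boundary as an algebraic set.
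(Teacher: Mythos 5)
Your argument is correct and is essentially the paper's own proof: the paper likewise pairs the squared relation $t^2|2-\overline{u}c-u\overline{w}|^2-|c-w|^2=0$ with the quartic \eqref{eq:equation} and eliminates $u$ via $\mathrm{resultant}_u$, obtaining (after a rotation placing $z$ on the positive real axis) an explicit nonzero factor $\mathcal{B}_{c,t}(w)$ whose zero set contains the boundary, with the degenerate cases $c=0$, $t=0$ and the extraneous-root issue treated exactly as you indicate. The only difference is cosmetic: the paper normalizes $z$ to the positive real axis and certifies $R\not\equiv 0$ by displaying the resultant computed symbolically.
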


\begin{proof}
  Without loss of generality, we may assume that the center point $z=:c $
  is on the positive real axis. Then,
  \begin{align}  \label{eq:smet}
    s_{\mathbb{D}}(c,w)
      &=\sup_{\zeta\in\partial\mathbb{D}}
       \frac{|c-w|}{|c-\zeta|+|\zeta-w|}  \notag \\
      &=\frac{|c-w|}{|2-\overline{u}c-u\overline{w}|}
       \quad (\mbox{from {Remark} \ref{lem:myLemma1}}),
  \end{align}
  where $u $ is a minimizing root of the equation
  \begin{equation}  \label{eq:U}
    U_c(w)={c\overline{w}}u^4-(c+\overline{w})u^3+(c+w)u-cw=0\,.
  \end{equation}

  Moreover, $B_s(0,t)=\{|w|<\frac{2t}{1+t}\} $ (resp. $B_s(c,0)=\{c\} $)
  holds for $c=0 $ (resp. $t=0$), and $B_s(c,t)=\{0\} $ holds if and only
  if $c=0 $ and $t=0 $. Therefore we may assume that $c\neq0 $, $t\neq0 $
  and $w\not\equiv0 $\,.

  Now, consider the following system of equations $s_{\mathbb{D}}(c,w)=t $
  and $U_c(w)=0 $, i.e,
  \begin{equation}  \label{eq:SU}
    S_{c,t}(w)=t^2|2-\overline{u}c-u\overline{w}|^2-|c-w|^2=0
      \quad \mbox{and} \quad U_c(w)=0\,.
  \end{equation}
  The above two equations have a common root if and only if both of
  polynomials $S_{c,t}(w) $ and $U_c(w) $ have non-zero leading coefficient
  with respect to $u $ variable and the resultant satisfies
  $\mbox{resultant}_u(S_{c,t},U_c) = 0 $. Using the ``resultant''
  command of the Risa/Asir software, we have
  \begin{equation*}
    \mbox{resultant}_u(S_{c,t},U_c)= cw\overline{w}\cdot
     {\mathcal{B}}_{c,t}(w)\,,
  \end{equation*}
  where %Eliminating $u $ from the above equations, we have {\small \
  \begin{align*}
    & {\mathcal{B}}_{c,t}(w) \\
    &=(\overline{w}c-1)(wc-1)\big((c^2+w\overline{w}-2)^2 -4(\overline{w}%
     c-1)(wc-1)\big)^2t^8 \\
    &-(c-w)(c-\overline{w})\big(4\overline{w}wc^8-3(w+\overline{w})c^7 -2(2%
     \overline{w}^2w^2+2\overline{w}w-1)c^6 \\
    &-(w+\overline{w})(13w\overline{w}+2)c^5-2(2\overline{w}^3w^3 -(36\overline{w%
     }^2+10)w^2-27\overline{w}w \\
    & -10\overline{w}^2-4)c^4-(w+\overline{w})(13\overline{w}^2w^2 +92\overline{w%
     }w+32)c^3 \\
    & +2(w\overline{w}(2\overline{w}^3w^3-2\overline{w}^2w^2 +27\overline{w}%
     w+48)+2(5w\overline{w}+2)(w^2+\overline{w}^2))c^2 \\
    & -w\overline{w}(w+\overline{w})(3\overline{w}^2w^2+2\overline{w}w+32)c +2w^2%
     \overline{w}^2(w\overline{w}+4)\big)t^6 \\
    &+(c-w)^2(c-\overline{w})^2\big(6\overline{w}wc^6-3(w+\overline{w})c^5 +(4%
     \overline{w}^2w^2+16\overline{w}w+1)c^4 \\
    &-2(w+\overline{w})(13w\overline{w}+5)c^3+(6\overline{w}^3w^3 +(16\overline{w%
     }^2+1)w^2+52\overline{w}w+\overline{w}^2)c^2 \\
    &-w\overline{w}(w+\overline{w})(3w\overline{w}+10)c +\overline{w}^2w^2\big)%
     t^4 \\
    &-c(c-w)^3(c-\overline{w})^3\big(4w\overline{w}c(c^2+w\overline{w}+3) -(c^2+w%
     \overline{w})(w+\overline{w})\big)t^2 \\
    &+c^2w\overline{w}(c-w)^4(c-\overline{w})^4\,.
  \end{align*}
  Moreover, we can check that
  \begin{equation*}
     \mathcal{B}_{c,0}(w)=|w|^2c^2|c-w|^8
  \end{equation*}
  and
  \begin{equation*}
    \mathcal{B}_{0,t}(w)=|w|^4t^4  \big((t-1)^2|w|^2-4t^2\big)
    \big((t+1)^2|w|^2-4t^2\big)\,.
  \end{equation*}
  Hence, the boundary of $B_s(c,t) $ is included in the algebraic curve
  defined by the equation ${\mathcal{B}}_{c,t}(w)=0 $\,.
\end{proof}

\begin{rem}
  The algebraic curve $\{ w: {\mathcal{B}}(w)=0 \}$ does not coincide with the boundary
  $\partial B_s(c,t) $. There is an ``extra'' part of the curve since the
  equation \eqref{eq:U} contains extraneous solutions.
\end{rem}

\begin{figure}[tbp]
  \includegraphics[width=0.5\linewidth]{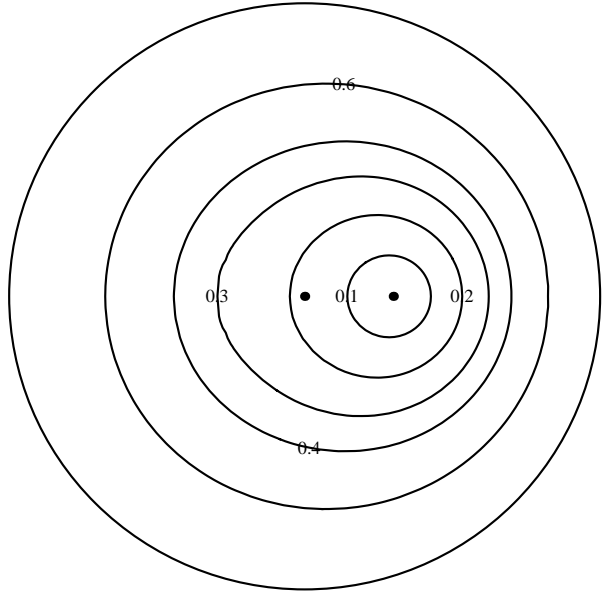}
  \caption{Level sets $\{x+ i y: s_{\mathbb{D}}(0.3, x+i y) =t \}$ for
           $t=0.1, 0.2, 0.3, 0.4, 0.6\,$ and the unit circle.
           By Lemma \ref{lem:algcur} these level sets are
           contained in an algebraic curve.}
  \label{fig:sDcircles}
\end{figure}

%\label{section3b}

The analytic formula in Corollary \ref{cor:sDformula} for the triangular
ratio metric $s_{\mathbb{D}}(z_1,z_2)$ is not very practical. Therefore we
next give an algorithm based on Theorem \ref{thm:Fuji} for the evaluation
of the numerical values.

\medskip \textbf{Algorithm.} We next give a Mathematica algorithm for
computing $s_{\mathbb{D}}(x,y)$ for given points $x,y \in{\mathbb{D}}\,.$

\small

\begin{verbatim}

   sD[x_, y_] := Module[{u, sol, mySol, tmp = 2*Sqrt[2]},
   sol = Solve[ Conjugate[ x*y] u^4 -  Conjugate[x + y] u^3 +
      (x + y) u - x*y == 0, {u}];
   mySol = u /. sol;
   Do[If[Abs[Abs[mySol[[i]] ] - 1] < 10^(-12),
      tmp = Min[tmp,
        Abs[mySol[[i]] - x] + Abs[mySol[[i]] - y]]], {i, 1, Length[mySol]}];
    Abs[x - y]/tmp] ;


\end{verbatim}

 \normalsize

{
%\centerline{\includegraphics[width=0.3\linewidth]{reflect.eps}}
}

%%%%%%%%%%%%%%%%%%%%%%%%%%%%%%%%%%%%%
%%%%%%%%%%%%%%%%%%%%%%%%%%%%%%%%%%%%%
%%%%%%%%%%%%%%%%%%%%%%%%%%%%%%%%%%%%%

%\section{Remarks on multiplicity of roots}

%\label{section2b}

%%%%%%%%%%%%%%%%%%%%%%%%%%%%%%%%%%%%%
%%%%%%%%%%%%%%%%%%%%%%%%%%%%%%%%%%%%%
%%%%%%%%%%%%%%%%%%%%%%%%%%%%%%%%%%%%%

%\input{sec-multi3.tex}

%\input{mmmult.tex}
 %%%%%%%%%%%%%%%%%%%%%%%%%%%%%%%%%%%%%
%%%%%%%%%%%%%%%%%%%%%%%%%%%%%%%%%%%%%
%%%%%%%%%%%%%%%%%%%%%%%%%%%%%%%%%%%%%

%%%%%%%%%%%%%%%%%%%%%%%%%%%%%%%%%%%%%%%
%%%%%%%%%%%%%%%%%%%%%%%%%%%%%%%%%%%%%%%%%
%%%%%%%%%%%%%%%%%%%%%%%%%%%%%%%%%%%%%%

\section{Geometric approach to the Ptolemy-Alhazen problem}

{ %%%%%%%%%%%%%%%%%%%%%%%%%%%%%%%%%%%%%%%
%%%%%%%%%%%%%%%%%%%%%%%%%%%%%%%%%%%%%%%%%
%%%%%%%%%%%%%%%%%%%%%%%%%%%%%%%%%%%%%%
}

In this section the unimodular roots of equation %
\eqref{eq:equation} are characterized as points of intersection of a conic
section and the unit circle, then $n$ such roots are studied, where $n=4$
in the case of the exterior problem and $n=2 $ in the case of the interior
problem. We describe the construction of the conic section mentioned above. Except in the cases where $0,z_{1},z_{2}$ are collinear or $
\left\vert z_{1}\right\vert =\left\vert z_{2}\right\vert \,, $
the construction cannot be carried out
as ruler-and-compass construction.
 Neumann
\cite{neu} proved that Alhazen's interior problem for points $z_{1},z_{2}$
is solvable by ruler and compass only for $(\mathrm{Re}z_{1},\mathrm{Im}%
z_{1},\mathrm{Re}z_{2},\mathrm{Im}z_{2})$ belonging to a null subset of $%
\mathbb{R}^{4}$, in the sense of Lebesgue measure.

We characterize algebraically condition \eqref{eq:theptu} without
assuming that $z_{1},z_{2}\in \mathbb{D}$, or $z_{1},z_{2}\in \mathbb{C}%
\setminus \overline{\mathbb{D}}$, or $u\in \partial \mathbb{D}$\,.

\begin{lem}
  Let $z_{1},z_{2}\in \mathbb{C}$ and $u\in \mathbb{C}^{\ast }%
  \mathbb{\setminus }\left\{ z_{k}:k=1,2\right\} $. The following are
  equivalent:
  \begin{enumerate}[{\rm (i)}]
    \item \label{item:i}
          $\measuredangle (z_{1},u,0)=\measuredangle (0,u,z_{2})$.
    \item \label{item:ii}
          $\frac{u^{2}}{\left( u-z_{1}\right) \left( u-z_{2}\right) }
            =\frac{\overline{u}^{2}}{\left( \overline{u}-\overline{z_{1}}\right)
               \left(\overline{u}-\overline{z_{2}}\right) }$ and
          $\frac{u^{2}}{\left(u-z_{1}\right) \left( u-z_{2}\right) }
            +\frac{\overline{u}^{2}}{\left(\overline{u}-\overline{z_{1}}\right)
           \left( \overline{u}-\overline{z_{2}}\right) }>0$;
    \item \label{item:iii}
        \begin{equation}\label{eq:cubic1}
          \overline{z_{1}z_{2}}u^{2}-\left(\overline{z_{1}}+\overline{z_{2}}\right)
          \overline{u}u^{2}+\left( z_{1}+z_{2}\right) \overline{u}^{2}u-z_{1}z_{2}%
          \overline{u}^{2}=0\
        \end{equation}
         and
        \begin{equation} \label{eq:ineq1}
          \overline{z_{1}z_{2}}u^{2}-\left( \overline{z_{1}}+\overline{z_{2}}\right)
          \overline{u}u^{2}-\left( z_{1}+z_{2}\right) \overline{u}^{2}u+z_{1}z_{2}%
          \overline{u}^{2}+2u^{2}\overline{u}^{2}>0\,.
        \end{equation}
  \end{enumerate}
\end{lem}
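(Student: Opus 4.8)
The plan is to run the equivalences as $(\text{i})\Leftrightarrow(\text{ii})\Leftrightarrow(\text{iii})$, reducing the geometric angle condition to the statement that a single complex number is a positive real, and then clearing denominators. First I would translate the oriented angles into arguments exactly as in \eqref{eq:hansha}: writing the direction of $[u,z_{1}]$ as $z_{1}-u$ and of $[u,0]$ as $-u$ gives $\measuredangle(z_{1},u,0)=\arg\frac{u}{u-z_{1}}$, and similarly $\measuredangle(0,u,z_{2})=\arg\frac{u-z_{2}}{u}$. Since $u\notin\{0,z_{1},z_{2}\}$ the quantity $\zeta:=\frac{u^{2}}{(u-z_{1})(u-z_{2})}$ is a well-defined nonzero complex number, and condition (i), namely $\arg\frac{u}{u-z_{1}}=\arg\frac{u-z_{2}}{u}$, is equivalent to $\arg\zeta\equiv 0\pmod{2\pi}$.

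For $(\text{i})\Leftrightarrow(\text{ii})$ I would use the elementary fact that a nonzero $\zeta\in\mathbb{C}$ is a positive real number if and only if $\zeta=\overline{\zeta}$ together with $\zeta+\overline{\zeta}>0$. Observing that $\overline{\zeta}=\frac{\overline{u}^{2}}{(\overline{u}-\overline{z_{1}})(\overline{u}-\overline{z_{2}})}$, these two conditions are precisely the two displayed conditions in (ii). The one point requiring care is the passage from $\arg\zeta\equiv 0\pmod{2\pi}$ back to the un-reduced equality of the two angles: writing $\theta_{1}=\arg\frac{u}{u-z_{1}}$ and $\theta_{2}=\arg\frac{u-z_{2}}{u}$, both lie in $(-\pi,\pi]$, so $\theta_{1}-\theta_{2}\in(-2\pi,2\pi)$ and the only multiple of $2\pi$ in this interval is $0$; hence $\arg\zeta=0$ forces $\theta_{1}=\theta_{2}$, which is exactly (i).

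For $(\text{ii})\Leftrightarrow(\text{iii})$ I would clear denominators using the positive real number $D:=(u-z_{1})(u-z_{2})(\overline{u}-\overline{z_{1}})(\overline{u}-\overline{z_{2}})=|u-z_{1}|^{2}|u-z_{2}|^{2}>0$. Multiplying $\zeta=\overline{\zeta}$ by $D$ turns it into $u^{2}(\overline{u}-\overline{z_{1}})(\overline{u}-\overline{z_{2}})=\overline{u}^{2}(u-z_{1})(u-z_{2})$, and cancelling the common term $u^{2}\overline{u}^{2}$ from both sides yields exactly \eqref{eq:cubic1}. Multiplying the inequality $\zeta+\overline{\zeta}>0$ by the positive number $D$ (so the sense of the inequality is preserved) and expanding $u^{2}(\overline{u}-\overline{z_{1}})(\overline{u}-\overline{z_{2}})+\overline{u}^{2}(u-z_{1})(u-z_{2})$ produces the left-hand side of \eqref{eq:ineq1}, the $2u^{2}\overline{u}^{2}$ term arising from the two copies of $u^{2}\overline{u}^{2}$. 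Since $D>0$, all four implications are genuine equivalences.

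The computations themselves are routine; the main obstacle is purely bookkeeping. Concretely, I would have to match the two expansions above term by term against the stated polynomials \eqref{eq:cubic1} and \eqref{eq:ineq1}, keeping the signs straight (note in particular the sign flip on the $(z_{1}+z_{2})$ and $z_{1}z_{2}$ terms between \eqref{eq:cubic1} and \eqref{eq:ineq1}). It is worth emphasizing that at no stage do we use $z_{1},z_{2}\in\mathbb{D}$, $z_{1},z_{2}\in\mathbb{C}\setminus\overline{\mathbb{D}}$, or $u\in\partial\mathbb{D}$: only $u\neq 0$ and $u\neq z_{1},z_{2}$ enter, through the nonvanishing of $\zeta$ and of $D$, which is consistent with the earlier observation that Theorem \ref{thm:Fuji} holds without those restrictions.
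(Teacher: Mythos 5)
Your proposal is correct and follows essentially the same route as the paper: the paper likewise sets $v=\frac{u}{u-z_{1}}:\frac{u-z_{2}}{u}=\frac{u^{2}}{(u-z_{1})(u-z_{2})}$, characterizes (i) as $v=\overline{v}$ together with $v+\overline{v}>0$, and passes to (iii) by clearing the positive denominator $|u-z_{1}|^{2}|u-z_{2}|^{2}$. Your extra care about the $\arg$ ambiguity modulo $2\pi$ and the explicit expansion only make the paper's terser argument fully detailed.
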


\begin{proof}
  Let $u\in \mathbb{C}^{\ast }\mathbb{\setminus }\left\{
  z_{k}:k=1,2\right\} $.
  Clearly, $ \measuredangle (z_{1},u,0) =\arg\frac{u}{u-z_{1}}$
  and $\measuredangle (0,u,z_{2}) =\arg \frac{u-z_{2}}{u}$\,.
  Denoting $v:=\frac{u}{u-z_{1}}:\frac{u-z_{2}}{u}$\,, we see that
  $\measuredangle (z_{1},u,0) =\measuredangle (0,u,z_{2})$ if and only if
  $v$ satisfies both $v=\overline{v}$ and $v+\overline{v}>0$\,, i.e.
  if and only if (\ref{item:ii}) holds.

   We have $v=\overline{v}$ (respectively, $v+\overline{v}>0$) if
  and only if \eqref{eq:cubic1} (respectively, \eqref{eq:ineq1}) holds,
  therefore (\ref{item:ii}) and (\ref{item:iii}) are equivalent.

   In the special case $z_{1}=z_{2}=0$ ($z_{1}=z_{2}\neq 0$) (\ref{item:i}),
  (\ref{item:ii}) and (\ref{item:iii}) are satisfied whenever
  $u\in \mathbb{C}^{\ast }$ (respectively, if and only if
  $u=\lambda z_{1}$ for some real number $\lambda \neq 0,1$)\,.
\end{proof}

\begin{rem}
  Let $u\in \mathbb{C}^{\ast }\mathbb{\setminus }\left\{z_{k}:k=1,2\right\}\,. $
  If
  \begin{equation*}
    \frac{u^{2}}{\left( u-z_{1}\right) \left( u-z_{2}\right) }
      =\frac{\overline{u}^{2}}{\left( \overline{u}-\overline{z_{1}}\right)
       \left( \overline{u}-\overline{z_{2}}\right) }
    \,\,\mathrm{and} \, \,
    \frac{u^{2}}{\left(u-z_{1}\right) \left( u-z_{2}\right) }
       +\frac{\overline{u}^{2}}{\left(\overline{u}-\overline{z_{1}}\right)
        \left( \overline{u}-\overline{z_{2}}\right) }<0\,,
  \end{equation*}
  then $\left\vert \measuredangle (z_{1},u,0)
  -\measuredangle (0,u,z_{2})\right\vert =\pi $\,. The converse also holds.

  Consider the interior problem, with $z_{1},z_{2}\in \mathbb{D}$
  and $u\in \partial \mathbb{D}$. The unit circle is exterior to the circles
  of diameters $\left[ 0,z_{1}\right] $, $\left[ 0,z_{2}\right] $. An elementary
  geometric argument shows that
  $-\frac{\pi }{2}< \measuredangle(z_{1},u,0)<\frac{\pi }{2}$ and
  $-\frac{\pi }{2}<\measuredangle(0,u,z_{2}) <\frac{\pi }{2}$\,,
  therefore $\left\vert \measuredangle (z_{1},u,0)
   -\measuredangle (0,u,z_{2})\right\vert \neq \pi $\,.
  In this case \eqref{eq:cubic1} implies
  $\measuredangle (z_{1},u,0) =\measuredangle (0,u,z_{2})$\,.
\end{rem}

  The equation \eqref{eq:cubic1} defines a curve passing through $0$,
  $z_{1}$ and $z_{2}$, that is a cubic if $z_{1}+z_{2}\neq 0$,
  respectively a conic section if $z_{1}+z_{2}=0$ with
  $z_{1},z_{2}\in \mathbb{C}^{\ast }$\,.
  Then  under the inversion with respect to the unit circle, the image of the curve
  given by \eqref{eq:cubic1}  has the equation
  \begin{equation}\label{eq:conic1}
    \overline{z_{1}z_{2}}u^{2}-\left( \overline{z_{1}}+\overline{z_{2}}\right)
    u+\left( z_{1}+z_{2}\right) \overline{u}-z_{1}z_{2}\overline{u}^{2}=0\,.
  \end{equation}
  This is a conic section, that degenerates to a line if $z_{1}z_{2}=0$
  with $z_{1},z_{2}$ not both zero.

\begin{rem}
   If $u\in \partial \mathbb{D}$, then \eqref{eq:cubic1}
  (respectively, \eqref{eq:conic1}) holds if and only if
  \begin{equation*}
    \overline{z_{1}z_{2}}u^{2}-\left( \overline{z_{1}}+\overline{z_{2}}\right)
    u+\left( z_{1}+z_{2}\right) \frac{1}{u}-z_{1}z_{2}\frac{1}{u^{2}}=0\,.
  \end{equation*}
  The equations \eqref{eq:conic1}, \eqref{eq:cubic1} and \eqref{eq:equation} have
  the same unimodular roots.
\end{rem}

{ %%%% lem4.7
}

\begin{lem}
  Let $z_{1},z_{2}\in \mathbb{C}^{\ast }$. The conic section $\Gamma $
  given by \eqref{eq:conic1} has the center
  $c=\frac{1}{2}\left( \frac{1}{\overline{z_{1}}}+\frac{1}{\overline{z_{2}}}\right) $
  and it passes through $0,\frac{1}{\overline{z_{1}}}$, $\frac{1}{\overline{z_{2}}}$,
  $\frac{1}{\overline{z_{1}}}+\frac{1}{\overline{z_{2}}}$.
  If $\left\vert z_{1}\right\vert =\left\vert z_{2}\right\vert $
  or $\left\vert \arg z_{1}-\arg z_{2}\right\vert \in \left\{ 0,\pi \right\} $,
  then $\Gamma $ consists of the parallels $d_{1}$, $d_{2}$ through $c$
  to the bisectors (interior, respectively exterior) of the angle
  $\measuredangle (z_{1},0,z_{2})$\,.
  In the other cases $\Gamma $ is an equilateral hyperbola having the
  asymptotes $d_{1}$ and $d_{2}$\,.
\end{lem}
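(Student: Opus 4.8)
The plan is to recognize that equation \eqref{eq:conic1} is nothing but the locus where a quadratic polynomial takes real values. Writing $a:=\overline{z_{1}z_{2}}$ and $b:=\overline{z_{1}}+\overline{z_{2}}$ (so that $\overline{a}=z_{1}z_{2}$ and $\overline{b}=z_{1}+z_{2}$), the left-hand side of \eqref{eq:conic1} is exactly $(au^{2}-bu)-\overline{(au^{2}-bu)}$, so that
\[
  \Gamma=\{\,u\in\mathbb{C}:\ \Im(au^{2}-bu)=0\,\}.
\]
Since $z_{1},z_{2}\in\mathbb{C}^{\ast}$ we have $a\neq0$, so this is a genuine degree-two equation. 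First I would complete the square, $au^{2}-bu=a(u-c)^{2}-ac^{2}$ with $c:=\tfrac{b}{2a}$, and check directly that $\tfrac{b}{2a}=\tfrac12\bigl(\tfrac1{\overline{z_{1}}}+\tfrac1{\overline{z_{2}}}\bigr)$, the asserted center. The substitution $v:=u-c$ turns the defining condition into $\Im(av^{2})=\Im(ac^{2})$, which is invariant under $v\mapsto-v$; hence $\Gamma$ is centrally symmetric about $u=c$, which identifies $c$ as its center (the point of intersection of the two lines in the degenerate case).

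For the four listed points, central symmetry does most of the work: $0$ and $\tfrac1{\overline{z_{1}}}+\tfrac1{\overline{z_{2}}}=2c$ are reflections of each other through $c$, and so are $\tfrac1{\overline{z_{1}}}$ and $\tfrac1{\overline{z_{2}}}$ (their midpoint is $c$). It therefore suffices to verify that $u=0$ and $u=\tfrac1{\overline{z_{1}}}$ lie on $\Gamma$: for $u=0$ one has $au^{2}-bu=0\in\mathbb{R}$, and for $u=\tfrac1{\overline{z_{1}}}$ a one-line computation gives $au^{2}-bu=\tfrac{\overline{z_{2}}}{\overline{z_{1}}}-1-\tfrac{\overline{z_{2}}}{\overline{z_{1}}}=-1\in\mathbb{R}$; the other two points then follow from the symmetry already established.

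For the classification I would pass to polar form. Writing $a=|a|e^{i\varphi}$ and $v=re^{i\theta}$, the centered equation reads $|a|\,r^{2}\sin(\varphi+2\theta)=k$ with $k:=\Im(ac^{2})$. When $k\neq0$ this describes a rectangular hyperbola, and when $k=0$ it degenerates into the two lines $\sin(\varphi+2\theta)=0$; in either case the asymptotes (respectively the lines) point in the two \emph{perpendicular} directions $\theta=-\varphi/2$ and $\theta=-\varphi/2+\tfrac\pi2$, so $\Gamma$ is always equilateral. The step I expect to be the most computational is evaluating $k$ and matching the directions to the bisectors. Writing $z_{j}=\rho_{j}e^{i\alpha_{j}}$, a short computation of $ac^{2}=\tfrac{b^{2}}{4a}$ yields
\[
  k=\Im(ac^{2})=\frac{(\rho_{2}^{2}-\rho_{1}^{2})\,\sin(\alpha_{1}-\alpha_{2})}{4\rho_{1}\rho_{2}},
\]
so $k=0$ exactly when $\rho_{1}=\rho_{2}$ (i.e.\ $|z_{1}|=|z_{2}|$) or $\alpha_{1}-\alpha_{2}\in\{0,\pi\}$ (i.e.\ $|\arg z_{1}-\arg z_{2}|\in\{0,\pi\}$), which is precisely the stated dichotomy. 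Finally, since $\varphi=\arg\overline{z_{1}z_{2}}=-(\alpha_{1}+\alpha_{2})$, the asymptotic directions are $\theta=\tfrac{\alpha_{1}+\alpha_{2}}{2}$ and $\theta=\tfrac{\alpha_{1}+\alpha_{2}}{2}+\tfrac\pi2$, which are exactly the directions of the interior and exterior bisectors of $\measuredangle(z_{1},0,z_{2})$. Combined with the fact that both pass through $c$, this gives the description of $d_{1},d_{2}$ in the degenerate case and of the asymptotes in the hyperbolic case, completing the proof.
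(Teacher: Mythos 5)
Your proof is correct and follows essentially the same route as the paper's: both obtain the center from the central symmetry about $c$, both detect the degenerate (pair-of-lines) case by evaluating the defining form at the center (your $k=\Im(ac^{2})$ is exactly the paper's $\tfrac14\,\mathrm{Im}\bigl(\overline{z_{1}}/\overline{z_{2}}+\overline{z_{2}}/\overline{z_{1}}\bigr)$), and both read off the rectangular-hyperbola structure and the bisector directions from an explicit normal form --- yours in polar coordinates about $c$, the paper's in rotated Cartesian coordinates via \eqref{eq:conic3}. The only cosmetic difference is that you verify $1/\overline{z_{1}},\,1/\overline{z_{2}}\in\Gamma$ by direct substitution, where the paper appeals to the inversion relating \eqref{eq:conic1} to the cubic \eqref{eq:cubic1}.
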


\begin{proof}
  The equation \eqref{eq:conic1} is equivalent to
  \begin{equation} \label{eq:conic2}
    \mathrm{Im}\left( \overline{z_{1}z_{2}}u
      \left( \frac{1}{\overline{z_{1}}}+\frac{1}{\overline{z_{2}}}-u\right)
      \right) =0\,.
  \end{equation}%
  The curve $\Gamma $ passes through the points $0$ and
  $2c=\frac{1}{\overline{z_{1}}}+\frac{1}{\overline{z_{2}}}$.
  If $u$ satisfies \eqref{eq:conic2}, then $2c-u$ also satisfies
  \eqref{eq:conic2}, therefore $\Gamma $ has the center $c$\,.
  Since $z_{1}$ and $z_{2}$ are on the cubic curve given by \eqref{eq:cubic1},
  $\Gamma $ passes through $\frac{1}{\overline{z_{1}}}$ and
  $\frac{1}{\overline{z_{2}}}$.
  The conic section $\Gamma $ is a pair of lines if and only if $\Gamma $
  passes through its center.
  For $u=\frac{1}{2}\left( \frac{1}{\overline{z_{1}}}+\frac{1}{\overline{z_{2}}}\right) $
  we have
  \begin{equation*}
    \mathrm{Im}\left( \overline{z_{1}z_{2}}u\left( \frac{1}{\overline{z_{1}}}+%
    \frac{1}{\overline{z_{2}}}-u\right) \right) =\frac{1}{4}\mathrm{Im}\left(
    \frac{\overline{z_{1}}}{\overline{z_{2}}}+\frac{\overline{z_{2}}}{\overline{%
    z_{1}}}\right) \,,
  \end{equation*}
  therefore $\Gamma $ is a pair of lines if and only if
  $\frac{\overline{z_{1}}}{\overline{z_{2}}}+\frac{\overline{z_{2}}}
    {\overline{z_{1}}}\in \mathbb{R}$.
  The following conditions are equivalent:

  (1) $\frac{\overline{z_{1}}}{\overline{z_{2}}}
       +\frac{\overline{z_{2}}}{\overline{z_{1}}}\in \mathbb{R}$; \
  (2) $\frac{z_{2}}{z_{1}}\in \mathbb{R}$ or
      $\left\vert \frac{z_{2}}{z_{1}}\right\vert =1$;  \
  (3) $\left\vert \arg z_{1}-\arg z_{2}\right\vert \in\left\{ 0,\pi \right\} $ or
      $\left\vert z_{1}\right\vert =\left\vert z_{2}\right\vert $.

  Denote $u=x+iy$. Using a rotation around the origin and a
  reflection we may assume that $\arg z_{2}=-\arg z_{1}=:\alpha $, where
  $0\leq \alpha \leq \frac{\pi }{2}$.
  In this case the equation of $\Gamma $ is
  \begin{equation}\label{eq:conic3}
    \left( x-\frac{\left\vert z_{1}\right\vert +\left\vert z_{2}\right\vert }{%
    2\left\vert z_{1}z_{2}\right\vert }\cos \alpha \right) \left( y-\frac{%
    \left\vert z_{2}\right\vert -\left\vert z_{1}\right\vert }{2\left\vert
    z_{1}z_{2}\right\vert }\sin \alpha \right) =\frac{\left\vert
    z_{2}\right\vert ^{2}-\left\vert z_{1}\right\vert ^{2}}{8\left\vert
    z_{1}z_{2}\right\vert ^{2}}\sin 2\alpha\,.
  \end{equation}

   The equation \eqref{eq:conic3} shows that $\Gamma $ is the pair of
  lines $d_{1}$, $d_{2}$ if $\left\vert z_{1}\right\vert =\left\vert
  z_{2}\right\vert $ or $\sin 2\alpha =0$, otherwise $\Gamma $ is an
  equilateral hyperbola having the asymptotes $d_{1}$ and $d_{2}$\,.
\end{proof}

% lem4.10
\begin{lem}[Sylvester's theorem] \label{lem:syl}
  In any triangle with vertices $z_{1},z_{2},z_{3}$
  the orthocenter $z_{H}$ and the circumcenter $z_{C}$ satisfy the identity
  $z_{H}+2z_{C}=z_{1}+z_{2}+z_{3}$\,.
\end{lem}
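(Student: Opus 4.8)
The plan is to normalize the configuration by translating the circumcenter to the origin, establish the identity in that normalized position, and then undo the translation. Since both the orthocenter and the circumcenter are translation-equivariant (each is defined through lines---altitudes, respectively perpendicular bisectors---that move rigidly with the triangle), replacing every vertex $z_k$ by $w_k := z_k - z_C$ produces a triangle with circumcenter at $0$ and orthocenter $z_H - z_C$. Thus it suffices to prove that for a triangle whose circumcenter is the origin the orthocenter equals the sum of the vertices. Granting that, substituting $w_1 + w_2 + w_3 = z_1 + z_2 + z_3 - 3z_C$ gives $z_H - z_C = z_1 + z_2 + z_3 - 3z_C$, that is $z_H + 2z_C = z_1 + z_2 + z_3$, which is the asserted identity.

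So I reduce to the following normalized claim: if $|w_1| = |w_2| = |w_3| = R$, where $R$ is the common circumradius, then the orthocenter is $H := w_1 + w_2 + w_3$. I would verify this by checking that $H$ lies on each of the three altitudes. The altitude from $w_1$ is the line through $w_1$ perpendicular to the opposite side $[w_2, w_3]$, whose direction is $w_3 - w_2$. Since $H - w_1 = w_2 + w_3$, what I need is $w_2 + w_3 \perp w_3 - w_2$, which in complex notation means $(w_2 + w_3)\overline{(w_3 - w_2)}$ is purely imaginary, equivalently $(w_2 + w_3)\overline{(w_3 - w_2)} + \overline{(w_2 + w_3)}(w_3 - w_2) = 0$.

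Expanding this expression, the mixed terms $w_2 \overline{w_3}$ and $w_3 \overline{w_2}$ cancel in pairs, leaving $2(|w_3|^2 - |w_2|^2)$, which vanishes precisely because $|w_2| = |w_3| = R$. Hence $H$ lies on the altitude from $w_1$, and by the cyclic symmetry of the argument it lies on the altitudes from $w_2$ and $w_3$ as well. For a nondegenerate triangle two altitudes already meet in a single point, so $H = w_1 + w_2 + w_3$ must be that common intersection, i.e.\ the orthocenter.

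The only genuine computational ingredient is the perpendicularity check, and it is short once orthogonality of $p, q \in \mathbb{C}$ is written as $\Re(p\overline{q}) = 0$; the equal-modulus hypothesis $|w_k| = R$ is exactly what annihilates the surviving terms. I do not anticipate any real obstacle: the translation reduction is purely formal, and the collapse $2(|w_3|^2 - |w_2|^2) = 0$ is immediate. The one point worth stating explicitly is that the triangle is assumed nondegenerate, so that a circumcenter exists and two altitudes determine the orthocenter uniquely.
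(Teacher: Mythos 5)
Your proof is correct, but it takes a different route from the paper's. The paper derives the identity in two lines from Euler's straight-line theorem: writing $z_G=\tfrac{1}{3}(z_1+z_2+z_3)$ for the centroid and invoking $z_H-z_G=2(z_G-z_C)$, it gets $z_H+2z_C=3z_G$ immediately. You instead prove the normalized statement from scratch: after translating the circumcenter to the origin you verify directly that $w_1+w_2+w_3$ lies on each altitude, the whole content being the identity $\Re\bigl((w_2+w_3)\overline{(w_3-w_2)}\bigr)=|w_3|^2-|w_2|^2=0$. Your computation checks out (the mixed terms do cancel, and in the degenerate subcase $w_2+w_3=0$ the point coincides with the vertex $w_1$ and lies on that altitude trivially), and the translation-equivariance reduction is sound. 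What your approach buys is self-containedness --- you use nothing beyond the definition of the orthocenter as the common point of the altitudes, and in fact your argument re-proves the concurrence of the altitudes along the way; what the paper's approach buys is brevity, at the cost of importing Euler's theorem as a black box. Either is acceptable here, since the lemma is only an auxiliary step toward Lemma \ref{lem:mylem511}.
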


\begin{proof}
  Let $z_{G}$ be the centroid of the triangle. It is well-known
  that $z_{G}=\frac{z_{1}+z_{2}+z_{3}}{3}$. By Euler's straightline theorem,
  $z_{H}-z_{G}=2(z_{G}-z_{C})$. Then $z_{H}+2z_{C}=3z_{G}=z_{1}+z_{2}+z_{3}$\,.
\end{proof}

%lem4.11
\begin{lem} \label{lem:mylem511}
  Let $z_{1},z_{2}\in \mathbb{C}^{\ast }$. The orthocenter of the
  triangle with vertices $0,\frac{1}{\overline{z_{1}}}$,
  $\frac{1}{\overline{z_{2}}}$ belongs to the conic section
  given by equation \eqref{eq:conic1}.
\end{lem}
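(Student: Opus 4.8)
The plan is to build directly on Sylvester's theorem (Lemma~\ref{lem:syl}), whose placement immediately before this statement strongly suggests it is the intended tool. Write $p=\frac{1}{\overline{z_1}}$ and $q=\frac{1}{\overline{z_2}}$, so that the triangle under consideration has vertices $0,p,q$, and observe that since $\overline{z_1z_2}=\frac{1}{pq}$, equation \eqref{eq:conic2} presents the conic $\Gamma$ in the convenient form
\[
  \Gamma:\quad \mathrm{Im}\!\left(\frac{u\,(p+q-u)}{pq}\right)=0 .
\]
Throughout I would assume $0,p,q$ are not collinear, i.e.\ $\mathrm{Im}(p\overline{q})\neq 0$, since otherwise the triangle is degenerate and its orthocenter is not defined.

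The first step is the decisive one. The vertex sum of the triangle is $0+p+q=p+q$, so Sylvester's theorem applied to the orthocenter $z_H$ and circumcenter $z_C$ gives $z_H+2z_C=p+q$, that is, $p+q-z_H=2z_C$. Substituting this into the displayed equation of $\Gamma$, membership $z_H\in\Gamma$ becomes
\[
  \mathrm{Im}\!\left(\frac{2\,z_H\,z_C}{pq}\right)=0,
  \qquad\text{equivalently}\qquad \frac{z_H z_C}{pq}\in\mathbb{R}.
\]
This cancellation is the heart of the argument: the awkward factor $p+q-u$ is replaced by the single point $2z_C$.

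Next I would compute $z_C$ from the circumcenter conditions $|z_C|=|z_C-p|=|z_C-q|$, which are the two real-linear equations $z_C\overline{p}+\overline{z_C}\,p=|p|^2$ and $z_C\overline{q}+\overline{z_C}\,q=|q|^2$. Solving them gives $z_C=\frac{|p|^2q-|q|^2p}{\overline{p}q-p\overline{q}}$, whence, using $|p|^2/p=\overline{p}$,
\[
  \frac{z_C}{pq}=\frac{\overline{p}-\overline{q}}{\overline{p}\,q-p\,\overline{q}} .
\]
Therefore $\frac{z_H z_C}{pq}=z_H\cdot\frac{\overline{p}-\overline{q}}{\overline{p}q-p\overline{q}}$, and I would finish by a parity-of-imaginary-part observation: the denominator $\overline{p}q-p\overline{q}=-2i\,\mathrm{Im}(p\overline{q})$ is purely imaginary, while the numerator $z_H(\overline{p}-\overline{q})=z_H\,\overline{(p-q)}$ is also purely imaginary. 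The latter is exactly the statement that $z_H$ lies on the altitude from the vertex $0$, i.e.\ that $z_H$ is perpendicular to the opposite side $p-q$, which holds by the very definition of the orthocenter. A quotient of two purely imaginary numbers is real, so $\frac{z_H z_C}{pq}\in\mathbb{R}$, which is what was required.

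I expect the circumcenter computation to be the only place needing a little care, and it is routine; note that its denominator $\overline{p}q-p\overline{q}$ is nonzero precisely under the non-collinearity assumption, which is the same condition guaranteeing that $z_H$ exists, so nothing is lost. In particular, when $|z_1|=|z_2|$ (with $0,z_1,z_2$ not collinear) the conic $\Gamma$ degenerates to a pair of perpendicular lines, yet the argument above applies verbatim. Conceptually this is the classical fact that a rectangular hyperbola circumscribing a triangle passes through its orthocenter — and the preceding lemma shows $\Gamma$ is generically such a hyperbola — but the Sylvester-based computation has the advantage of being self-contained and of covering the degenerate configurations uniformly.
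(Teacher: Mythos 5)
Your proof is correct, and although it rests on the same two ingredients as the paper's --- Sylvester's theorem (Lemma \ref{lem:syl}) together with the circumcenter of the triangle with vertices $0$, $p=1/\overline{z_1}$, $q=1/\overline{z_2}$ --- it assembles them in a genuinely different way. The paper turns Sylvester's identity into the explicit closed formula \eqref{eq:orthocenter} for the orthocenter $h$, substitutes it into $f(u)=2i\,\mathrm{Im}\left(\overline{z_1}\,\overline{z_2}u^{2}-(\overline{z_1}+\overline{z_2})u\right)$, and verifies $f(h)=0$ by a fairly heavy direct computation ending in a manifestly real expression. You instead feed Sylvester's identity $p+q-z_H=2z_C$ straight into the conic written as $\mathrm{Im}\left(u(p+q-u)/(pq)\right)=0$ at $u=z_H$, reducing membership to $z_Hz_C/(pq)\in\mathbb{R}$; after the routine circumcenter computation this becomes a quotient of $z_H\overline{(p-q)}$ by $\overline{p}q-p\overline{q}$, and both are purely imaginary --- the former precisely because $z_H$ lies on the altitude from the vertex $0$ (the defining property of the orthocenter), the latter by conjugation antisymmetry. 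This buys you a proof that never writes down the orthocenter explicitly and replaces the paper's closing identity-check with a one-line perpendicularity remark, at no cost in generality: the nondegeneracy you need, $\overline{p}q-p\overline{q}\neq 0$, is exactly the condition for the orthocenter to exist and is the same hypothesis hidden in the denominator of the paper's formula for $h$, so both arguments tacitly and harmlessly exclude the collinear case. Your observation that the verification is indifferent to whether $\Gamma$ is a hyperbola or a degenerate pair of lines is also accurate.
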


\begin{proof}
  Consider a triangle with vertices $z_{1},z_{2},z_{3}$ and
  denote by $z_{H}$ and $z_{C}$ the orthocenter and the circumcenter,
  respectively. By Sylvester's theorem, Lemma \ref{lem:syl},
  $z_{H}=z_{1}+z_{2}+z_{3}-2z_{C}$\,.

  But
  \begin{equation*}
     z_{C}=\det \left(
           \begin{array}{ccc}
              1 & 1 & 1 \\
              z_{1} & z_{2} & z_{3} \\
              \left\vert z_{1}\right\vert ^{2} & \left\vert z_{2}\right\vert ^{2} &
              \left\vert z_{3}\right\vert ^{2}%
           \end{array}%
          \right) :\det \left(
           \begin{array}{ccc}
              1 & 1 & 1 \\
              z_{1} & z_{2} & z_{3} \\
              \overline{z_{1}} & \overline{z_{2}} & \overline{z_{3}}%
           \end{array}%
          \right) \,.
  \end{equation*}
  If $z_{3}=0$, then
  $z_{C}=\frac{z_{1}z_{2}\left( \overline{z_{2}}
    -\overline{z_{1}}\right) }{z_{1}\overline{z_{2}}-\overline{z_{1}}z_{2}}$,
  hence
  \begin{equation*}
    z_{H}=\frac{\left( z_{1}-z_{2}\right) \left( z_{1}\overline{z_{2}}+\overline{%
    z_{1}}z_{2}\right) }{z_{1}\overline{z_{2}}-\overline{z_{1}}z_{2}}\,.
  \end{equation*}%
  Let $h$ be the orthocenter of the triangle with vertices
  $0,\frac{1}{\overline{z_{1}}}$,$\frac{1}{\overline{z_{2}}}$\,.
  The above formula implies
  \begin{equation}\label{eq:orthocenter}
    h=\frac{\overline{z_{2}}-\overline{z_{1}}}{\overline{z_{1}}\overline{z_{2}}}%
    \frac{z_{1}\overline{z_{2}}+\overline{z_{1}}z_{2}}{z_{1}\overline{z_{2}}-%
    \overline{z_{1}}z_{2}}\,.
  \end{equation}%
  Let $f( u ) :=$ $\overline{z_{1}}\overline{z_{2}}u^{2}-\left(
  \overline{z_{1}}+\overline{z_{2}}\right) u+\left( z_{1}+z_{2}\right)
  \overline{u}-z_{1}z_{2}\overline{u}^{2}$.$\ $ Then $f( u) =2i%
  \mathrm{Im}\left( \overline{z_{1}}\overline{z_{2}}u^{2}-\left( \overline{%
  z_{1}}+\overline{z_{2}}\right) u\right) $. Since $\overline{z_{1}}\overline{%
  z_{2}}h-\left( \overline{z_{1}}+\overline{z_{2}}\right) =\frac{2\overline{%
  z_{1}}\overline{z_{2}}\left( z_{2}-z_{1}\right) }{z_{1}\overline{z_{2}}-%
  \overline{z_{1}}z_{2}}$\,,
  it follows that
  \begin{equation*}
    \overline{z_{1}}\overline{z_{2}}h^{2}-\left( \overline{z_{1}}+\overline{z_{2}%
    }\right) h=\frac{-16\left\vert z_{2}-z_{1}\right\vert ^{2}}{\left\vert z_{1}%
    \overline{z_{2}}-\overline{z_{1}}z_{2}\right\vert ^{4}}\mathrm{Re}\left(
    z_{1}\overline{z_{2}}\right) \mathrm{Im}^{2}\left( z_{1}\overline{z_{2}}%
    \right)
  \end{equation*}
  is a real number, hence $f(h) =0\,.$
\end{proof}

Let $z_{1},z_{2}\in \mathbb{C}^{\ast }$ be such that
$\left\vert z_{1}\right\vert \neq \left\vert z_{2}\right\vert $ and
$\left\vert \arg z_{1}-\arg z_{2}\right\vert \notin \left\{ 0,\pi \right\} $\,.
Let $h$ be given by \eqref{eq:orthocenter}.
Note that $h-\left( \frac{1}{\overline{z_{1}}}+\frac{1}{\overline{z_{2}}}\right)
 =\frac{2\left(z_{2}-z_{1}\right) }{z_{1}\overline{z_{2}}-\overline{z_{1}}z_{2}}\neq 0$\,.
If $h\notin \left\{ 0,\frac{1}{\overline{z_{1}}},\frac{1}{\overline{z_{2}}}\right\} $
then the hyperbola $\Gamma $ passing through the five points
$0,\frac{1}{\overline{z_{1}}},\frac{1}{\overline{z_{2}}},
 \frac{1}{\overline{z_{1}}}+\frac{1}{\overline{z_{2}}}$\,,
$h$ can be constructed using a mathematical software.

In the cases where
$h\in \left\{ 0,\frac{1}{\overline{z_{1}}}, \frac{1}{\overline{z_{2}}}\right\} $\,,
we choose a vertex of the hyperbola $\Gamma $ as the fifth point needed to
construct $\Gamma $\,.
The vertices of the equilateral hyperbola $\Gamma $ are the intersections of
$\Gamma $ with the line passing through the center of the hyperbola, with the
slope $m=1$ if $\left\vert z_{1}\right\vert >\left\vert z_{2}\right\vert $\,,
respectively $m=-1$ if $\left\vert z_{1}\right\vert <\left\vert z_{2}\right\vert $\,.
Let $\alpha :=\frac{\arg z_{2}-\arg z_{1}}{2}$\,. Using \eqref{eq:conic3} it follows
that the distance $d$ between a vertex and the center of $\Gamma $ is
$d=\frac{\sqrt{\left\vert \left\vert z_{1}\right\vert ^{2}-\left\vert
z_{2}\right\vert ^{2}\right\vert }}{2\left\vert z_{1}z_{2}\right\vert }
\sqrt{\sin 2\alpha }$\,.

If $h=0$ we have $\alpha =\frac{\pi }{4}$ and
$d=\frac{1}{2}\sqrt{\Big| \big|
  \frac{1}{\overline{z_{2}}}\big|^{2}-\big| \frac{1}{\overline{z_{1}}}
  \big|^{2}\Big| }$\,.
%$d=\frac{1}{2}\sqrt{\left\vert \left\vert
%  \frac{1}{\overline{z_{2}}}\right\vert^{2}-\left\vert
%   \frac{1}{\overline{z_{1}}}
%  \right\vert ^{2}\right\vert }$\,.
Assume that $h=\frac{1}{\overline{z_{1}}}$,
the case $h=\frac{1}{\overline{z_{2}}}$ being similar.
Then $\left\vert z_{2}\right\vert =\left\vert z_{1}\right\vert
\cos 2\alpha <\left\vert z_{1}\right\vert $ and
$\left\vert\left\vert z_{1}\right\vert ^{2}-\left\vert z_{2}\right\vert ^{2}\right\vert
  =\left\vert z_{1}-z_{2}\right\vert ^{2}$, therefore $d=\frac{1}{2}\left\vert
   \frac{1}{\overline{z_{2}}}-\frac{1}{\overline{z_{1}}}\right\vert \sqrt{\sin
  2\alpha }$\,.
Let $z_{3}$ be the orthogonal projection of $\frac{1}{\overline{z_{1}}}$
on the line joining $\frac{1}{\overline{z_{2}}}$ to the origin.
Then
$d=\frac{1}{2}\sqrt{\left\vert \frac{1}{\overline{z_{2}}}
 -\frac{1}{\overline{z_{1}}}\right\vert \cdot \left\vert \frac{1}{\overline{z_{2}}}
 -z_{3}\right\vert }$\,.
We see that a vertex of $\Gamma $ can be constructed with ruler and compass
if $h\in \left\{ 0,\frac{1}{\overline{z_{1}}},\frac{1}{\overline{z_{2}}}\right\} $\,.

%rmk4.13
\begin{rem} \label{rem:myrem513}
  Being symmetric with respect to the center of $\Gamma$,
  $\frac{1}{\overline{z_{1}}}$ and $\frac{1}{\overline{z_{2}}}$ belong to
  distinct branches of $\Gamma $, each branch being divided by
  $\frac{1}{\overline{z_{1}}}$ or $\frac{1}{\overline{z_{2}}}$ into two arcs.
  If $z_{k}\in \mathbb{C\setminus }\overline{\mathbb{D}}$\,,
  $k\in \left\{1,2\right\} $\,, then each of these arcs joins
  $\frac{1}{\overline{z_{k}}}$\,,
  that is in the unit disk, with some point exterior to the unit disk,
  therefore it intersects the unit circle. It follows that, in the case of the
  exterior problem, $\Gamma $ intersects the unit circle at four distinct
  points.
\end{rem}

In the following we identify the points of intersection of the
conic section $\Gamma $ given by \eqref{eq:conic1} with the unit circle. After
finding the points $u\in \partial \mathbb{D}\cap \Gamma $ it is easy to
select among these the points $u$ for which \eqref{eq:theptu} holds,
respectively for which $\left\vert u-z_{1}\right\vert +\left\vert
u-z_{2}\right\vert $ attains its minimum or its maximum on
$\partial \mathbb{D}$\,.

First assume that $\Gamma $ is a pair of lines $d_{1},d_{2}$\,,
parallel to the interior bisector and to the exterior bisector of the
angle $\measuredangle (z_{1},0,z_{2})$, respectively.
Let $\alpha =\frac{1}{2}\left\vert \arg z_{2}-\arg z_{1}\right\vert $\,.
Then $\alpha \in \left\{ 0,\frac{\pi }{2}\right\} $ or
$\left\vert z_{1}\right\vert =\left\vert z_{2}\right\vert $\,.
The distances from the origin to $d_{1}$ and $d_{2}$ are
$\delta _{1}=\frac{\left\vert \left\vert z_{2}\right\vert -\left\vert
z_{1}\right\vert \right\vert }{2\left\vert z_{1}z_{2}\right\vert }\sin
\alpha $ and $\delta _{2}=\frac{\left\vert z_{1}\right\vert +\left\vert
z_{2}\right\vert }{2\left\vert z_{1}z_{2}\right\vert }\cos \alpha $\,.
Then $\Gamma $ intersects the unit circle at four distinct points in
the following cases:
(i) $z_{1},z_{2}\in \mathbb{C\setminus D}$;
(ii) $z_{1},z_{2}\in \mathbb{D}$ with
     $\frac{1}{2}\left\vert \frac{1}{\left\vert z_{1}\right\vert}
      -\frac{1}{\left\vert z_{2}\right\vert }\right\vert <1$
     or with $\left\vert z_{1}\right\vert
      =\left\vert z_{2}\right\vert >\cos \alpha $\,.
In the other cases for $z_{1},z_{2}\in \mathbb{D}$ the intersection of
$\Gamma $ with the unit circle consists of two distinct points.

%prop4.14
\begin{prop} \label{prop:mmprop}
  If the conic section $\Gamma $ given by \eqref{eq:conic1} is
  a hyperbola, then the intersection of $\Gamma $ with the
  unit circle consists of
  \begin{enumerate}[{\rm (i)}]
    \item  four distinct points if
          $z_{1},z_{2}\in \mathbb{C\setminus D}$, one in the interior
          of each angle determined by the lines that pass
          through the origin and $z_{1}$, respectively $z_{2}$;
    \item at least two distinct points if $z_{1},z_{2}\in \mathbb{D}$,
          one in the interior of the angle determined by the rays passing
          starting at the origin and passing through $z_{1}$, respectively
          $z_{2}$ and the other in the interior of the opposite angle.
  \end{enumerate}
\end{prop}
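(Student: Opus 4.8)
The plan is to realize $\Gamma\cap\partial\mathbb{D}$ as the zero set of a single real-valued function on the circle and to count and locate its sign changes. Writing $f(u)$ for the left-hand side of \eqref{eq:conic1}, the computation in the proof of Lemma \ref{lem:mylem511} gives $f(u)=2i\,\mathrm{Im}\big(\overline{z_1}\,\overline{z_2}u^2-(\overline{z_1}+\overline{z_2})u\big)$, so $u\in\Gamma$ if and only if $g(u):=\mathrm{Im}\big(\overline{z_1z_2}\,u^2-(\overline{z_1}+\overline{z_2})u\big)=0$; moreover on $\partial\mathbb{D}$ these zeros are exactly the unimodular roots of the quartic \eqref{eq:equation}. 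As in the normalization used earlier, after a rotation and a reflection I may assume $\arg z_2=-\arg z_1=\alpha$, and the hyperbola case forces $0<\alpha<\frac{\pi}{2}$ and $|z_1|\neq|z_2|$. The two lines through $0,z_1$ and through $0,z_2$ meet $\partial\mathbb{D}$ in the four points $\pm z_1/|z_1|$ and $\pm z_2/|z_2|$, which cut the circle into four arcs, one lying inside each of the four angles formed by these lines.

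First I would restrict $g$ to each of these lines. Parametrizing the line through $0$ and $z_1$ by $u=t\,z_1/|z_1|$, $t\in\mathbb{R}$, a short computation collapses $g$ to the real quadratic
\begin{equation*}
  g\Big(t\tfrac{z_1}{|z_1|}\Big)=t\big(|z_1|t-1\big)\frac{\mathrm{Im}(z_1\overline{z_2})}{|z_1|},
\end{equation*}
whose roots $t=0$ and $t=1/|z_1|$ recover the incidences $0,\,1/\overline{z_1}\in\Gamma$. Evaluating at the two circle points $t=\pm1$, together with the symmetric computation on the line through $z_2$ (where $\mathrm{Im}(z_2\overline{z_1})=-\mathrm{Im}(z_1\overline{z_2})$), gives the signs of $g$ at the four boundary points in terms of $\sigma:=\operatorname{sign}\mathrm{Im}(z_1\overline{z_2})$ and of $\operatorname{sign}(|z_k|-1)$. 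In the hyperbola case $\sigma\neq0$ and none of these four values vanishes, so each dividing point lies strictly off $\Gamma$.

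The heart of the argument is the resulting sign pattern read cyclically around the circle. In the exterior case $|z_1|,|z_2|>1$ the four signs alternate as $\sigma,-\sigma,\sigma,-\sigma$, so $g$ changes sign on every one of the four arcs; by the intermediate value theorem there is at least one intersection point in the interior of each of the four angles. Since \eqref{eq:equation} is a quartic it has at most four unimodular roots (and by Remark \ref{rem:myrem513} there are exactly four), so these four sign changes account for all of them and there is exactly one point in each angle, proving (i). In the interior case $|z_1|,|z_2|<1$ the pattern becomes $-\sigma,\sigma,\sigma,-\sigma$, so the sign changes occur precisely on the two arcs lying in $\measuredangle(z_1,0,z_2)$ and in its vertical angle; this yields at least one intersection point in each of those two opposite angles, proving (ii).

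The step I expect to be most delicate is the sign bookkeeping: listing the four boundary points in the correct cyclic order after normalization, checking that each $g(P_k)\neq0$ so that the roots are strictly interior to the angles rather than on the dividing rays, and, in the exterior case, upgrading ``a sign change on each arc'' to ``exactly four distinct simple roots'' via the degree bound. By contrast, the restriction of $g$ to the rays is routine once the identity $f(u)=2i\,g(u)$ is in hand, and the non-vanishing of $\mathrm{Im}(z_1\overline{z_2})$ is exactly the hypothesis that $\Gamma$ is a hyperbola.
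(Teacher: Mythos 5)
Your proposal is correct and follows essentially the same route as the paper: both reduce membership in $\Gamma\cap\partial\mathbb{D}$ to the vanishing of a real-valued function on the circle, evaluate its sign at the four points where the lines through $0,z_1$ and $0,z_2$ meet $\partial\mathbb{D}$ (after the same normalization $\arg z_2=-\arg z_1=\alpha$), and apply the intermediate value theorem on the resulting arcs. The only cosmetic difference is that you obtain the signs by restricting the quadratic $\mathrm{Im}\big(\overline{z_1z_2}u^2-(\overline{z_1}+\overline{z_2})u\big)$ to the two lines, whereas the paper evaluates the equivalent trigonometric function $g(t)$ directly at those angles (plus $t=0,\pm\pi$); your sign patterns and conclusions agree with the paper's.
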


\begin{proof}
  The intersection of $\Gamma $ with the unit circle consists of
  the points $u=e^{it}$, $t\in (-\pi ,\pi ]$ satisfying
  \begin{equation*}
    \mathrm{Im}\left( \overline{z_{1}z_{2}}e^{i2t}-\left( z_{1}+z_{2}\right)
    e^{-it}\right) =0\,.
  \end{equation*}%
  Let $z_{1},z_{2}\in \mathbb{C}^{\ast }$. There are at most four points of
  intersection of $\Gamma $ and the unit circle, since these are the
  roots of the quartic equation \eqref{eq:equation}.

  Using a rotation around the origin and a change of orientation
  we may assume that $\arg z_{2}=-\arg z_{1}=:\alpha $, where $0\leq \alpha
  \leq \frac{\pi }{2}\,.$ The above equation is equivalent to
  \begin{equation}\label{eq:teq}
    g\left( t\right) :=\left\vert z_{1}z_{2}\right\vert \sin 2t-\left\vert
    z_{1}\right\vert \sin \left( t+\alpha \right) -\left\vert z_{2}\right\vert
    \sin \left( t-\alpha \right) =0\,.
  \end{equation}

  We have
  \begin{equation*}
    g\left( -\pi \right) =g\left( \pi \right) =-g\left( 0\right) =\left(
    \left\vert z_{1}\right\vert -\left\vert z_{2}\right\vert \right) \sin
    \alpha\,,
  \end{equation*}
  \begin{equation*}
    g\left( \alpha -\pi \right) =\left\vert z_{1}\right\vert \left( \left\vert
    z_{2}\right\vert +1\right) \sin 2\alpha \,, \quad g\left( -\alpha \right)
    =\left\vert z_{2}\right\vert \left( 1-\left\vert z_{1}\right\vert \right)
    \sin 2\alpha \,,
  \end{equation*}
  \begin{equation*}
    g\left( \alpha \right) =\left\vert z_{1}\right\vert \left( \left\vert
    z_{2}\right\vert -1\right) \sin 2\alpha \,, \quad g\left( \pi -\alpha
    \right) =-\left\vert z_{2}\right\vert \left( \left\vert z_{1}\right\vert
    +1\right) \sin 2\alpha \,.
  \end{equation*}

  Consider the cases where $\Gamma $ is a hyperbola, i.e.
  $0<\alpha <\frac{\pi }{2}\,.$
%and that $\left\vert z_{1}\right\vert \neq \left\vert z_{2}\right\vert$ are not zero.
  Clearly, $-\pi <\alpha -\pi <-\alpha <0<\alpha <\pi -\alpha <\pi $. We have
  $g\left( \pi -\alpha \right) <0<g\left( \alpha -\pi \right) $, while
  $g\left(-\pi \right) =g\left( \pi \right) =-g\left( 0\right) $ has the
  same sign as $\left\vert z_{1}\right\vert -\left\vert z_{2}\right\vert $\,.
  \begin{enumerate}[{\rm (i)}]
    \item Assume that $z_{1},z_{2}\in \mathbb{C\setminus D}$.
          Then $g\left( -\alpha \right) <0$ and $g\left( \alpha \right) >0$\,.

          If $\left\vert z_{1}\right\vert <\left\vert z_{2}\right\vert $\,,
          then $g\left( -\pi \right) <0$ $<g\left( \alpha -\pi \right) >0$\,,
          $g\left( -\alpha \right) <0<g\left( 0\right) $ and
          $g(\alpha )>0>g\left( \pi-\alpha \right) $.
          Since $g$ is continuous on $\mathbb{R}$, equation
          \eqref{eq:teq} has at least one root in each of the open intervals
          $\left( -\pi,\alpha -\pi \right) $, $\left( \alpha -\pi ,-\alpha \right) $,
          $\left(-\alpha ,0\right) $ and $\left( \alpha ,\pi -\alpha \right) $\,.

          If $\left\vert z_{2}\right\vert <\left\vert z_{1}\right\vert $\,,
          then $g\left( \alpha -\pi \right) >0>g\left( -\alpha \right) $\,,
          $g\left(0\right) <0$ $<g(\alpha )$ and
          $g\left( \pi -\alpha \right)<0<g\left( \pi \right) $\,.
          The equation \eqref{eq:teq} has at least one root in each of the open
          intervals $\left( \alpha -\pi ,-\alpha \right) $\,,
          $\left(0,\alpha \right) $\,,
          $\left( \alpha ,\pi -\alpha \right) $ and
          $\left( \pi-\alpha ,\pi \right) $.
    \item Now assume that $z_{1},z_{2}\in \mathbb{D}$.
          Then $g\left(-\alpha \right) >0$ and $g\left( \alpha \right) <0\,.$

          If $\left\vert z_{1}\right\vert <\left\vert z_{2}\right\vert $\,,
          then $g\left( -\pi \right) <0$  $<g\left( \alpha -\pi \right) $
          and $g\left( 0\right) >0$  $>g(\alpha )$\,.
          Since $g$ is continuous on $\mathbb{R}$\,, equation \eqref{eq:teq}
          has at least one root in each of the open intervals
          $\left( -\pi ,\alpha -\pi \right) $ and $\left( 0,\alpha \right) $\,.

          If $\left\vert z_{1}\right\vert >\left\vert z_{2}\right\vert $\,,
          then $g\left( 0\right) >0>g(\alpha )$ and
          $g\left( \pi -\alpha \right)<0<g\left( \pi \right) $.
          The equation \eqref{eq:teq} has at least one root in each of the
          open intervals $\left( 0,\alpha \right) $ and
          $\left( \pi -\alpha,\pi \right) $\,.
  \end{enumerate}
\end{proof}

%\FRAME{dtbpF}{7.809cm}{6.0934cm}{0pt}{}{}{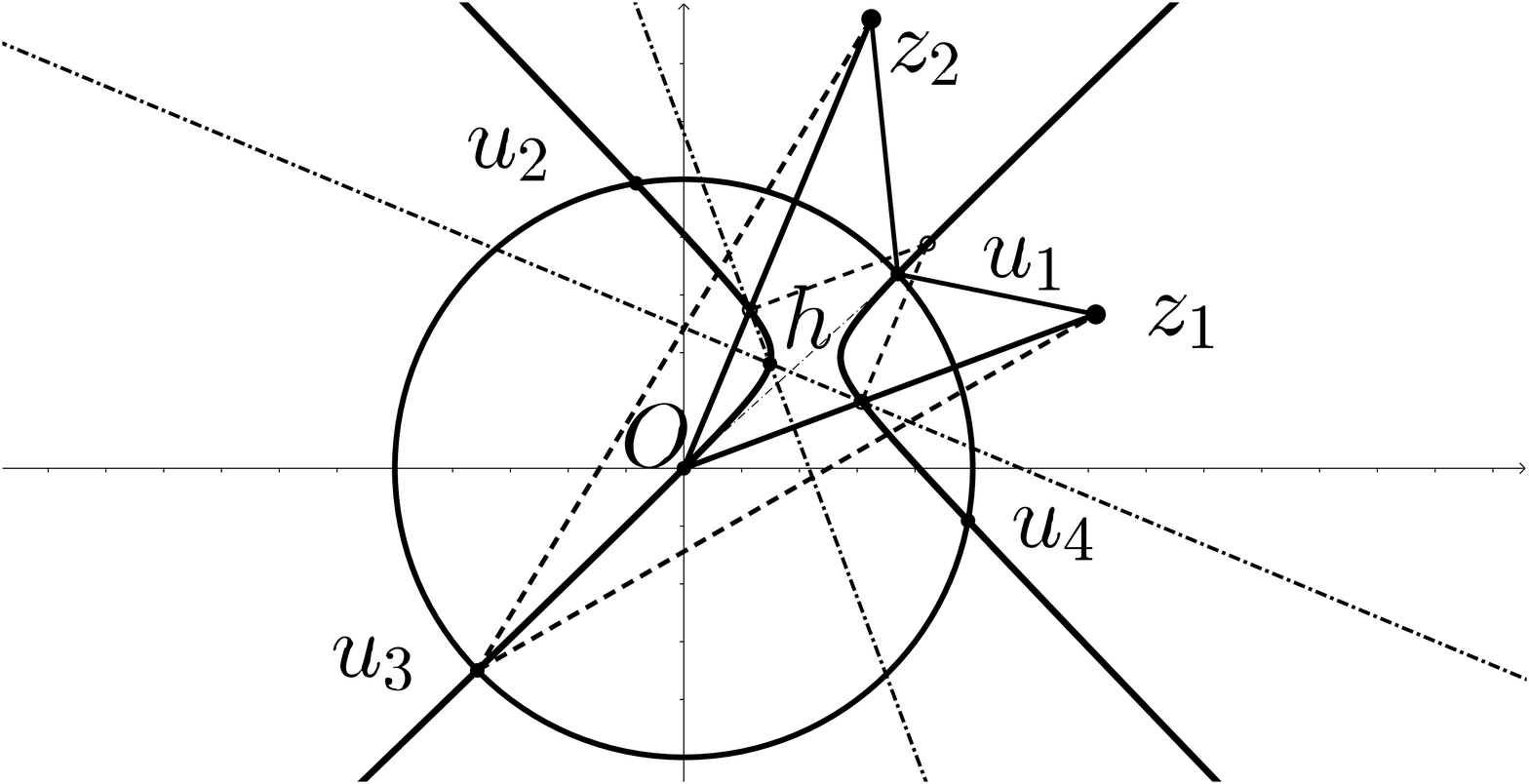}{\special{language
%"Scientific Word";type "GRAPHIC";maintain-aspect-ratio TRUE;display
%"USEDEF";valid_file "F";width 7.809cm;height 6.0934cm;depth
%0pt;original-width 20.7598in;original-height 10.5914in;cropleft
%"0.1722";croptop "1";cropright "0.8277";cropbottom "0";filename
%'exterior.eps';file-properties "XNPEU";}}

\begin{figure}[t]
  \vspace{-0.1cm}
  \centerline{\includegraphics[width=0.5\linewidth]{}}
  \caption{The exterior problem. Intersection of the conic
           \eqref{eq:conic1} with the unit circle.}
\end{figure}

\begin{cor} \label{cor:mycor516}
  The equation \eqref{eq:equation} has four distinct unimodular
  roots in the case of the exterior problem and has at least two distinct
  unimodular roots in the case of the interior problem.
\end{cor}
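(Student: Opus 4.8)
The plan is to count the unimodular roots of \eqref{eq:equation} geometrically, as the points of intersection of the conic $\Gamma$ from \eqref{eq:conic1} with the unit circle. The starting point, already established above, is that for $u\in\partial\mathbb{D}$ the equations \eqref{eq:equation} and \eqref{eq:conic1} are equivalent; hence the unimodular roots of \eqref{eq:equation} are exactly the points of $\Gamma\cap\partial\mathbb{D}$, and it suffices to count these in the two regimes.

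First I would use the classification of $\Gamma$ obtained above: either $|z_1|=|z_2|$ or $|\arg z_1-\arg z_2|\in\{0,\pi\}$, in which case $\Gamma$ is the pair of perpendicular lines $d_1,d_2$ through the center $c=\tfrac{1}{2}(1/\overline{z_1}+1/\overline{z_2})$, or else $\Gamma$ is an equilateral hyperbola. These alternatives are exhaustive, so it is enough to settle each.

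In the hyperbola regime I would apply Proposition \ref{prop:mmprop}: part (i) supplies four distinct intersection points when $z_1,z_2\in\mathbb{C}\setminus\mathbb{D}$, and part (ii) supplies at least two distinct intersection points when $z_1,z_2\in\mathbb{D}$. In the pair-of-lines regime I would use the computation immediately preceding Proposition \ref{prop:mmprop}, where the distances $\delta_1,\delta_2$ from the origin to $d_1,d_2$ are displayed: there case (i) yields four distinct points for the exterior problem, while the interior analysis yields either four or two, hence at least two. Distinctness is automatic in both regimes: in the hyperbola case the roots were located one per disjoint open $t$-interval, and in the line case $d_1\perp d_2$ meet only at $c$, which satisfies $|c|<1$ in the exterior problem (being the midpoint of the two interior points $1/\overline{z_k}$), so each line contributes two fresh intersection points. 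Assembling the two regimes shows that the exterior problem always has four distinct unimodular roots and the interior problem at least two.

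The only residual case is the interior subcase $z_1z_2=0$, excluded by the hypothesis $z_1,z_2\in\mathbb{C}^{\ast}$ of Proposition \ref{prop:mmprop} (the exterior problem forces $z_1,z_2\neq 0$). Taking $z_2=0\neq z_1$, the conic $\Gamma$ degenerates to the line $\{\mathrm{Im}(\overline{z_1}u)=0\}$ through the origin, which meets $\partial\mathbb{D}$ in the two distinct points $\pm z_1/|z_1|$, in agreement with Case 1 of the earlier Remark; this supplies the required two distinct unimodular roots. I expect no individual step to be difficult, the only real care being the uniform verification of distinctness across the case split just described.
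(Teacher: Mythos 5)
Your proposal is correct and follows essentially the route the paper intends: the corollary is stated without an explicit proof precisely because it is the assembly of the pair-of-lines discussion preceding Proposition \ref{prop:mmprop} with the hyperbola count in Proposition \ref{prop:mmprop}, via the remark that \eqref{eq:equation} and \eqref{eq:conic1} share their unimodular roots. Your added touches (the observation that the two lines meet only at $c$ with $|c|<1$, and the separate treatment of the degenerate interior case $z_1z_2=0$) are consistent with, and slightly tidier than, what the paper leaves implicit.
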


%%%%%%%%%%%%%%%%%%%%%%%%%%%%%%%%%%%%%
%%%%%%%%%%%%%%%%%%%%%%%%%%%%%%%%%%%%%
%%%%%%%%%%%%%%%%%%%%%%%%%%%%%%%%%%%%%

%\input{sec-multi3.tex}

\section{Remarks on the roots of the equation  \eqref{eq:equation}}
\label{section5}

In this section we study the number of the unimodular roots of the equation %
\eqref{eq:equation} (i.e., the roots lying on the unit circle) and their
multiplicities.
Denote $P(u)=\overline{z_{1}}\overline{z_{2}}u^{4}
 -(\overline{z_{1}}+\overline{z_{2}})u^{3}+(z_{1}+z_{2})u-z_{1}z_{2}$\,.
If either $z_{1}=0$ or $z_{2}=0$ then the cubic equation
\eqref{eq:equation} $P\left( u\right) =0$ has a root $u=0$ and two simple
roots on the unit circle.

We will assume in the following that $z_{1}\neq 0$ and $z_{2}\neq 0$\,.
As we observed in Section 2,
the quartic polynomial $P$ is self-inversive. Then $P$ has
an even number of zeros on the unit circle, each zero being counted as many
times as its multiplicity. According to Lemma \ref{lem:mylem28} $P$ has at
least two unimodular zeros, distinct or not, that is $P$ has four or two
unimodular zeros. There is a rich literature dealing with the location
of zeros of a complex self-inversive polynomial with respect to the unit
circle. After the publication of \cite{cohn}, also many other papers on this topic were published, see \cite{bm}, \cite{ck}, \cite{ls1}, \cite{ls2}, \cite{marden}.

Recall that the celebrated Gauss-Lucas theorem shows that the zeros of the
derivative $P^{\prime }$ of a complex polynomial $P$ lie within the convex hull
of the set of zeros of $P$. If a complex polynomial $P$ has all its zeros on
the unit circle, then the polynomial is self-inversive and, according to
Gauss-Lucas theorem \cite[Thm 6.1]{marden} all the zeros of $P^{\prime }$ are in the closed unit
disk. Moreover, the converse holds. A theorem of Cohn \cite{cohn} states
that a complex polynomial has all its zeros on the unit circle if and only
if the polynomial is self-inversive and its derivative has all its zeros in
the closed unit disk. A refinement of Cohn's theorem \cite[Theorem 1]{ck}
proves that all the zeros of a self-inversive polynomial $P(z)$\ lie on the
unit circle and are simple if and only if there exists a polynomial $Q(z)$\
with all its zeros in the unit disk $\left\vert z\right\vert <1$\
such that
$ P\left( z\right) =z^{m}Q\left( z\right) +e^{i\theta }Q^{\ast }\left(z\right) $
for some nonnegative integer $m$ and real $\theta $\,, where
$Q^{\ast }\left( z\right) =z^{n}\overline{Q}\left( \frac{1}{z}\right) $\,,
where $n=\deg Q$. A lemma in \cite{bm} shows that each unimodular zero of
the derivative of a self-inversive polynomial $P$ is also a zero of $P$\,.

\begin{lem}
  $P(u)=\overline{z_{1}}\overline{z_{2}}u^{4}
    -(\overline{z_{1}}+\overline{z_{2}})u^{3}+(z_{1}+z_{2})u-z_{1}z_{2}$
  cannot have two double zeros on the unit circle.
\end{lem}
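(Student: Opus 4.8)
The plan is to argue by contradiction via coefficient comparison. Since we assume $z_1\neq 0$ and $z_2\neq 0$, the leading coefficient $\overline{z_1}\,\overline{z_2}$ of $P$ is nonzero, so $P$ is a genuine quartic. If $P$ had two double zeros on the unit circle, say at distinct points $a,b\in\partial\mathbb{D}$, these four roots would exhaust the degree and I could write
\[
  P(u)=\overline{z_1}\,\overline{z_2}\,(u-a)^2(u-b)^2
       =\overline{z_1}\,\overline{z_2}\,\bigl(u^2-su+p\bigr)^2,
       \qquad s:=a+b,\ p:=ab .
\]
Expanding the square, the coefficient of $u^2$ on the right is $\overline{z_1}\,\overline{z_2}\,(s^2+2p)$.

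The decisive feature I would exploit is that $P$ has \emph{no} quadratic term: the coefficient of $u^2$ in $\overline{z_1}\,\overline{z_2}\,u^4-(\overline{z_1}+\overline{z_2})u^3+(z_1+z_2)u-z_1z_2$ is zero. Matching it against the expansion and cancelling $\overline{z_1}\,\overline{z_2}\neq 0$ forces $s^2+2p=0$, i.e. $p=-s^2/2$. Now I bring in the hypothesis $|a|=|b|=1$: it gives $|p|=1$, hence $|s|^2=2|p|=2$, while $|s|^2=|a+b|^2=2+2\,\Re(a\overline{b})$, so $\Re(a\overline{b})=0$. Together with $|a\overline{b}|=1$ this yields $a\overline{b}=\pm i$, that is $a=\pm i\,b$. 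Substituting $a=ib$ gives $p=ab=ib^2$, but $-s^2/2=-\tfrac12(1+i)^2b^2=-ib^2$, so $p=-s^2/2$ would require $ib^2=-ib^2$, impossible since $|b|=1$; the case $a=-ib$ is the mirror image. This contradiction establishes the lemma.

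I expect the subtle point to be precisely this final sign reckoning: the perpendicularity $a=\pm i\,b$ is forced because it is the unique unimodular configuration with $|a+b|^2=2$, yet it is exactly this configuration in which $p$ and $-s^2/2$ come out as negatives of one another rather than equal, so the contradiction is genuine and not a vacuous identity. It is worth noting that the argument uses neither the self-inversive symmetry of $P$ nor the remaining coefficient relations (those coming from the $u^3$, $u^1$, and constant terms): the vanishing of the quadratic coefficient, combined with the unimodularity of the putative roots, already precludes two double zeros on $\partial\mathbb{D}$.
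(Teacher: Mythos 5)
Your proof is correct and follows essentially the same route as the paper's: both write $P(u)=\overline{z_1z_2}(u-a)^2(u-b)^2$ and exploit the vanishing of the $u^2$ coefficient, which is exactly the relation $a^2+4ab+b^2=0$ (your $s^2+2p=0$). The paper extracts the contradiction slightly more directly by factoring this as $\bigl(a+(2-\sqrt{3})b\bigr)\bigl(a+(2+\sqrt{3})b\bigr)=0$, so that $|a/b|=2\mp\sqrt{3}\neq 1$, whereas you first deduce $|a+b|^2=2$ and then run the sign check on $a=\pm ib$; both are valid.
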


\begin{proof}
  Assume that $P$ has two double zeros $a$ and $b$ on the unit circle,
  $P(u)= \overline{z_{1}z_{2}}(z-a)^{2}(z-b)^{2}\
     (a,b\in \partial \mathbb{D},\ a\neq b)$\,.
  Since the coefficient of $u^{2}$ in $P\left( u\right) $ vanishes,
  \begin{equation*}
    a^{2}+4ab+b^{2}=\big(a+(2-\sqrt{3})b\big)\big(a+(2+\sqrt{3})b\big)=0\,.
  \end{equation*}%
  This contradicts the assumption $|a|=|b|=1$\,.
\end{proof}

Similarly, we rule out another case.

\begin{lem} \label{lem:doubleuni}
  For $P(u)=\overline{z_{1}}\overline{z_{2}}u^{4}
       -(\overline{z_{1}}+\overline{z_{2}})u^{3}+(z_{1}+z_{2})u-z_{1}z_{2}$
  it is not possible to have a double zero on the unit circle and
  two zeros not on the unit circle.
\end{lem}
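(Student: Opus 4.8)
The plan is to suppose, for contradiction, that $P$ has a double unimodular zero $a$ (with $|a|=1$) together with two further zeros that do \emph{not} lie on the unit circle. By the self-inversive property established in Section~2, the zeros off the unit circle come in pairs $\{u_0, 1/\overline{u_0}\}$, so the two off-circle zeros must be exactly $c e^{i\gamma}$ and $\tfrac{1}{c}e^{i\gamma}$ for some real $\gamma$ and some $c$ with $0<c<1$ (after writing the off-circle root in polar form). Thus I would factor
\begin{equation*}
  P(u)=\overline{z_1 z_2}\,(u-a)^2\bigl(u-c e^{i\gamma}\bigr)\bigl(u-\tfrac{1}{c}e^{i\gamma}\bigr),
  \qquad |a|=1,\ 0<c<1 .
\end{equation*}

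Next I would exploit the fact that the coefficient of $u^2$ in $P(u)$ vanishes, exactly as in the previous lemma. Expanding the factored form and collecting the degree-two term, the vanishing of that coefficient gives a single complex relation among $a$, $c$, and $e^{i\gamma}$. Writing $a=e^{i\theta}$ and separating the real/imaginary parts (or, more cleanly, taking the modulus of both sides as in the proof of Lemma~\ref{lem:mylem28}), I expect to obtain an identity of the shape
\begin{equation*}
  a^2 + \Bigl(c+\tfrac1c\Bigr)a\,e^{i\gamma} + e^{2i\gamma}=0 .
\end{equation*}
The key observation is that this is a quadratic in $a/e^{i\gamma}$ whose two roots multiply to $1$ and sum to $-\bigl(c+\tfrac1c\bigr)$, i.e. both roots are real and negative with product $1$.

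From here the contradiction should follow from the monotonicity fact already used in Lemma~\ref{lem:mylem28}: the function $f(x)=x+\tfrac1x$ satisfies $|c+\tfrac1c|>2$ for $0<c<1$, so the real negative root $a/e^{i\gamma}$ of the quadratic has modulus strictly greater than $1$ (or strictly less than $1$). But $|a|=|e^{i\gamma}|=1$ forces $|a/e^{i\gamma}|=1$, which is incompatible with a root of modulus different from $1$. Hence no such configuration of zeros can occur, proving the lemma. I anticipate the main obstacle is purely bookkeeping: carefully expanding the quartic to isolate the $u^2$-coefficient and massaging the resulting relation into the clean quadratic above so that the modulus estimate $|c+\tfrac1c|>2$ can be invoked directly. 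Once the relation is in that form the contradiction is immediate, so the conceptual content is the same monotonicity argument as in Lemma~\ref{lem:mylem28}, now applied to a mixed double/off-circle root pattern.
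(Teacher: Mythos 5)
Your proposal is correct and follows essentially the same route as the paper: factor $P$ as $\overline{z_1z_2}(u-a)^2(u-b)\bigl(u-\tfrac{1}{\overline{b}}\bigr)$, use the vanishing of the $u^2$-coefficient, and derive a contradiction from $\bigl|b+\tfrac{1}{\overline{b}}\bigr|>2$ against the unimodularity of $a$ (the paper phrases this as $2\ge\bigl|a^2+\tfrac{b}{\overline{b}}\bigr|=2\bigl|b+\tfrac{1}{\overline{b}}\bigr|>4$). The only slip is bookkeeping, as you anticipated: the $u^2$-coefficient relation is $a^2+2\bigl(c+\tfrac1c\bigr)ae^{i\gamma}+e^{2i\gamma}=0$ (your display drops the factor $2$), but since $c+\tfrac1c>2$ your discriminant argument, or equivalently the modulus estimate, goes through either way.
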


\begin{proof}
  Assume that $P$ has a double zero $a$ with $|a|=1$ and the zeros
  $b\neq\frac{1}{\overline{b}}$\,.
  Then $P(u)=\overline{z_{1}z_{2}}(z-a)^{2}(z-b)
  \left( z-\frac{1}{\overline{b}}\right) $\,.
   The coefficient of $u^{2}$ in $P\left( u\right) $ vanishes,
  \begin{equation*}
    a^{2}+\frac{b}{\overline{b}}+2a\left( b+\frac{1}{\overline{b}}\right)
      =0\,.
  \end{equation*}

  We have
  \begin{equation*}
    \left\vert b+\frac{1}{\overline{b}}\right\vert ^{2}
       =\left( b+\frac{1}{\overline{b}}\right) \left( \overline{b}
          +\frac{1}{b}\right) =2+\left\vert b\right\vert ^{2}
          +\frac{1}{\left\vert b\right\vert ^{2}}
       >4\,.
  \end{equation*}%
  Then $2\geq \left\vert a^{2}+\frac{b}{\overline{b}}\right\vert
         =\left\vert 2a\left( b+\frac{1}{\overline{b}}\right) \right\vert
         >4$\,, a contradiction.
\end{proof}

\begin{lem}
  If $P(u)=\overline{z_{1}}\overline{z_{2}}u^{4}
     -(\overline{z_{1}}+\overline{z_{2}})u^{3}+(z_{1}+z_{2})u-z_{1}z_{2}$
  has a triple zero $a$ and a simple
  zero $b$, then $b=-a$, with $a$ and $b$ lying on the unit circle and
  $\left\vert z_{1}+z_{2}\right\vert =2\left\vert z_{1}z_{2}\right\vert $\,.
\end{lem}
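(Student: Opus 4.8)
The plan is to use the hypothesis directly. Since we assume $z_1\neq 0$ and $z_2\neq 0$, the leading coefficient $\overline{z_1 z_2}$ of $P$ is nonzero, so a triple zero $a$ together with a simple zero $b$ accounts for all four roots, and we may factor
\[
  P(u)=\overline{z_1 z_2}\,(u-a)^3(u-b)\,.
\]
Expanding this product and comparing it coefficient-by-coefficient with the given expression for $P(u)$ is the whole of the argument. The decisive structural fact is that the quartic in \eqref{eq:equation} has \emph{no} quadratic term, exactly the feature already exploited in the two preceding lemmas.

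First I would read off the coefficient of $u^2$. In the factorization it equals $\overline{z_1 z_2}\,(3a^2+3ab)=3\overline{z_1 z_2}\,a(a+b)$, whereas in $P$ it is $0$. Matching them gives $3\overline{z_1 z_2}\,a(a+b)=0$. Here $\overline{z_1 z_2}\neq 0$, and $a\neq 0$ as well, because otherwise the constant term $\overline{z_1 z_2}\,a^3 b$ of the factorization would vanish, forcing $z_1 z_2=0$, contrary to assumption. Hence $a+b=0$, that is $b=-a$, which is the first assertion.

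With $b=-a$ substituted, the remaining conclusions fall out of the constant and cubic coefficients. The constant term of the factorization is $\overline{z_1 z_2}\,a^3 b=-\overline{z_1 z_2}\,a^4$; matching it with $-z_1 z_2$ yields $a^4=z_1 z_2/\overline{z_1 z_2}$, so $|a|^4=1$ and therefore $|a|=1$, whence also $|b|=|-a|=1$ and both zeros lie on the unit circle. The coefficient of $u^3$ in the factorization is $-\overline{z_1 z_2}(3a+b)=-2a\,\overline{z_1 z_2}$; comparing it with $-(\overline{z_1}+\overline{z_2})$ gives $2a\,\overline{z_1 z_2}=\overline{z_1}+\overline{z_2}$, and taking moduli with $|a|=1$ produces $2|z_1 z_2|=|z_1+z_2|$, the final assertion.

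There is no genuine analytic obstacle here; the argument is a single coefficient comparison for $(u-a)^3(u-b)$, and the only point deserving care is the justification that $a\neq 0$, since this is precisely what legitimizes cancelling $3\overline{z_1 z_2}\,a$ to conclude $b=-a$. I would dispatch it via the constant term, as above. As a consistency check one may instead extract the modulus identity from the linear coefficient: with $b=-a$ it reads $2a^3\,\overline{z_1 z_2}=z_1+z_2$, giving $2|a|^3|z_1 z_2|=|z_1+z_2|$, the same conclusion once $|a|=1$ is known.
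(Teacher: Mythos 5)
Your proof is correct and follows essentially the same route as the paper: factor $P(u)=\overline{z_1z_2}(u-a)^3(u-b)$ and compare coefficients, using the vanishing of the $u^2$ coefficient to force $b=-a$ and the $u^3$ coefficient to extract $|z_1+z_2|=2|z_1z_2|$. The one small difference is that the paper gets $|a|=|b|=1$ up front from the self-inversive structure of $P$ (a triple zero must be fixed by $u\mapsto 1/\overline{u}$), whereas you derive it afterwards from the constant term via $a^4=z_1z_2/\overline{z_1z_2}$; your variant is slightly more self-contained, and your justification that $a\neq 0$ before cancelling is a point the paper handles with the same observation ($\overline{z_1z_2}a^3b=-z_1z_2\neq 0$).
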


\begin{proof}
  Assume that $P$ has a triple zero $a$ and a simple zero $b$,
  $P(u)=\overline{z_{1}z_{2}}(z-a)^{3}(z-b)$,
  where $a,b\in \mathbb{C},\ a\neq b$. Since $P$
  is self-inversive, $|a|=|b|=1$ and $b=\frac{1}{\overline{a}}=-a$\,.
  Also, the fact that the coefficient of $u^{2}$ in $P(u)$ vanishes
  already implies $a(a+b)=0$.
  But $\overline{z_{1}z_{2}}a^{2}b=-z_{1}z_{2}\neq 0$,
  therefore $b=-a$.
  Considering the coefficient of $u^{3}$ in
  $P(u)=\overline{z_{1}z_{2}}(u-a)^{3}(u+a)$,
  it follows that
  $2a\overline{z_{1}}\overline{z_{2}}=\overline{z_{1}}+\overline{z_{2}}$,
  hence
  $\left\vert z_{1}+z_{2}\right\vert=2\left\vert z_{1}z_{2}\right\vert $\,.
\end{proof}

\begin{example}
  Find the relation between $z_{1},z_{2}$ such that
  $P(u)=\overline{z_{1}}\overline{z_{2}}u^{4}-(\overline{z_{1}}
    +\overline{z_{2}})u^{3}+(z_{1}+z_{2})u-z_{1}z_{2}$
  has the triple zero $1$ and the simple zero $(-1)$.

  Suppose
  \begin{equation}\label{eq:UU}
    P(u)=\overline{z_{1}z_{2}}(u-1)^{3}(u+1)=0\,.
  \end{equation}%
  From the constant term of \eqref{eq:equation} and \eqref{eq:UU},
  we have $z_{1}z_{2}\in \mathbb{R}$.
  Similarly, from the coefficient of $u$ in \eqref{eq:equation} and
  \eqref{eq:UU}, we have
  \begin{equation*}
    z_{1}+z_{2}-2z_{1}z_{2}=0\,.
  \end{equation*}%
  Therefore $z_{1}$ and $z_{2}$ coincide with the two solutions of
  $w^{2}-2pw+p=0$, where $p=z_{1}z_{2}\in \mathbb{R}$
  $($in particular $-1<p<1$ for the interior problem$)$.

  In the case where $0<p<1$, $z_{1}$ and $z_{2}$ are complex conjugates to
  each other since $\mbox{\rm discriminant}(w^{2}-2pw+p,w)=4(p^{2}-p)<0$.
  Hence, $P(u)=z_{1}\overline{z_{1}}(u-1)^{3}(u+1)=0$\,,
  and we have
  \begin{equation*}
    (2\overline{z_{1}}-1)z_{1}-\overline{z_{1}}
     =2\Big|z_{1}-\frac{1}{2}\Big|-\frac{1}{2}=0\,.
  \end{equation*}%
  Therefore, for $z_{1}$ on the circle $|z-\frac{1}{2}|=\frac{1}{2}$
  and $ z_{2}=\overline{z_{1}}$, $P(u)=0$ has exactly two roots $1$ and $-1$. This case was studied in \cite[Thm 3.1]{hkvz}.
  In fact, for $z_{1}=a+bi$ with $a^{2}-a+b^{2}=0$, $P(u)=a(u-1)^{3}(u+1)=0$\,.

  In the case where $-1<p<0$, the quadratic equation $w^{2}-2pw+p=0$ has two
  real roots and we have
  \begin{equation*}
    P(u)=z_{1}z_{2}(u-1)^{3}(u+1)\,.
  \end{equation*}%
  Moreover, we can parametrize two foci as follows,
  $z_{1}=t,\ z_{2}=\frac{t}{2t-1}$ $(-1<t<\sqrt{2}-1)$\,.
\end{example}

It remains to study the following cases:

\begin{enumerate}[{\bf {Case} 1.}]
   \item $P$ has four simple unimodular zeros.
   \item  $P$ has two simple unimodular zeros and two zeros that are
          not unimodular.
   \item $P$ has a double unimodular zero and two simple unimodular
         zeros.
\end{enumerate}

\begin{prop}\label{prop:foursimple}
  Assume that $z_{1},z_{2}\in \mathbb{C}^{\ast }$.
   Let $P(u)=\overline{z_{1}}\overline{z_{2}}u^{4}
    -(\overline{z_{1}}+\overline{z_{2}})u^{3}+(z_{1}+z_{2})u-z_{1}z_{2}$\,.
   Then
   \begin{enumerate}[{\rm a)}]
     \item  \label{item:a}
            $P$ has four simple unimodular zeros if $\left\vert
            z_{1}+z_{2}\right\vert <\left\vert z_{1}z_{2}\right\vert $ and
     \item \label{item:b}
           $P$ has exactly two unimodular zeros, that are simple,
           if $\left\vert z_{1}+z_{2}\right\vert
                 >2\left\vert z_{1}z_{2}\right\vert $.
    \item  \label{item:c}
           If $P$ has four simple unimodular zeros, then $\left\vert
            z_{1}+z_{2}\right\vert <2\left\vert z_{1}z_{2}\right\vert $.
    \item \label{item:d}
          If $P$ has exactly two unimodular zeros, that are simple,
          then $\left\vert z_{1}+z_{2}\right\vert
           >\left\vert z_{1}z_{2}\right\vert $.
  \end{enumerate}
\end{prop}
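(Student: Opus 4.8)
The plan is to push everything through the elementary symmetric functions of the four roots. Writing $P(u)=\overline{z_1z_2}\prod_{k=1}^{4}(u-u_k)$ and reading off coefficients, the vanishing of the $u^2$-term gives $e_2:=\sum_{i<j}u_iu_j=0$, while $e_1:=\sum_k u_k=(\overline{z_1}+\overline{z_2})/\overline{z_1z_2}$, so that $|z_1+z_2|=|e_1|\,|z_1z_2|$. Thus all four assertions are statements about $|e_1|$: (a) and (c) say $|e_1|<1$ and $|e_1|<2$, while (b) and (d) say $|e_1|>2$ and $|e_1|>1$. The single most useful fact is Newton's identity $p_2=e_1^2-2e_2$, which because $e_2=0$ collapses to $e_1^2=p_2=\sum_k u_k^2$; this drives (c), (b) and (d).

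For (a) I would invoke the refinement of Cohn's theorem quoted above, using the explicit self-inversive decomposition
\[
  P(u)=u^2Q(u)-Q^*(u),\qquad Q(u)=u\bigl(\overline{z_1z_2}\,u-(\overline{z_1}+\overline{z_2})\bigr),
\]
which one checks by direct expansion (here $m=2$ and $e^{i\theta}=-1$). The zeros of $Q$ are $0$ and $(\overline{z_1}+\overline{z_2})/\overline{z_1z_2}$, the latter of modulus $|z_1+z_2|/|z_1z_2|$. Under the hypothesis $|z_1+z_2|<|z_1z_2|$ both zeros of $Q$ lie in the open unit disk, so the cited theorem yields at once that all four zeros of $P$ are unimodular and simple.

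Assertions (c) and (b) follow from one estimate. If all four $u_k$ are unimodular then $|e_1|^2=|e_1^2|=\bigl|\sum_k u_k^2\bigr|\le 4$, with equality only when all $u_k^2$ coincide, i.e. when each $u_k\in\{w,-w\}$ for a common $w$; this is impossible for four distinct values, and equally impossible when a double (or triple) root accompanies other distinct unimodular roots. Hence in Case 1 and in Case 3 one gets $|e_1|<2$, which is (c). For (b), $|e_1|>2$ rules out Case 1 and Case 3 by this estimate and rules out the triple-root case (which forces $|e_1|=2$); since Lemma \ref{lem:mylem28} and the preceding lemmas leave only Cases 1--3 and the triple case, only Case 2 survives, giving exactly two simple unimodular zeros.

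The real work is (d). Here two roots leave the circle, so the estimate on $\sum u_k^2$ no longer helps and the naive triangle inequality yields only $|e_1|>0$. Instead I would split $P$ into its unimodular factor $u^2-\tau u+\rho$ (with $\tau=a+b$, $\rho=ab$, $|a|=|b|=1$) and its off-circle factor $u^2-\sigma u+\pi$ (with $\sigma=w+1/\overline{w}$, $\pi=w/\overline{w}$, $|w|\neq1$). Unimodularity of $a,b$ gives $\rho=\tau^2/|\tau|^2$, and $|w|\neq1$ gives $\pi=\sigma^2/|\sigma|^2$ with $|\sigma|^2=(|w|+1/|w|)^2>4$. The vanishing $u^2$-coefficient reads $\tau\sigma=-(\rho+\pi)$; substituting the two phase identities and comparing arguments collapses this to the clean scalar relation
\[
  \mathrm{Re}(\tau\overline{\sigma})=-\tfrac12\,|\tau|^2|\sigma|^2,
\]
which forces $|\tau|\,|\sigma|\le2$ and hence $|\tau|<1$. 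Writing $T=|\tau|^2<1$ and $\Sigma=|\sigma|^2>4$, one then computes $|e_1|^2=|\tau+\sigma|^2=T+\Sigma-T\Sigma=1-(1-T)(1-\Sigma)$; since $1-T>0$ while $1-\Sigma<0$, the product is negative and $|e_1|^2>1$, i.e. $|z_1+z_2|>|z_1z_2|$. The main obstacle is exactly extracting the identity for $\mathrm{Re}(\tau\overline{\sigma})$: one must use the self-inversive phase constraints on \emph{both} factors at once, since dropping either leaves only the useless bound $|e_1|>0$. The degenerate subcase $\tau=0$ (i.e. $b=-a$) is handled separately and trivially, as then $|e_1|=|\sigma|>2$.
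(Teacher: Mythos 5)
Your proof is correct, but it takes a genuinely different route from the paper's in three of the four parts, and the differences are worth recording. For a) the paper first runs Rouch\'e's theorem on $P'$ to place the zeros of $P'$ in the unit disk, invokes Cohn's theorem to get unimodularity, and only then applies the refinement from \cite{ck}; you skip the Rouch\'e--Cohn step entirely by exhibiting the decomposition $P(u)=u^2Q(u)-Q^*(u)$ with the degree-two $Q(u)=u(\overline{z_1z_2}\,u-(\overline{z_1}+\overline{z_2}))$, whose zeros are $0$ and $(\overline{z_1}+\overline{z_2})/\overline{z_1z_2}$, the latter lying in $\mathbb{D}$ precisely under the hypothesis $|z_1+z_2|<|z_1z_2|$; I have checked that the decomposition is exact, so this is a genuine shortcut. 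For c) the paper uses Gauss--Lucas twice together with the lemma from \cite{bm} to push the zeros of $P''$ into the open disk; your Newton-identity argument $e_1^2=p_2=\sum_k u_k^2$ (valid because $e_2=0$) combined with the equality analysis in $\left|\sum_k u_k^2\right|\le 4$ is strictly more elementary, and it also powers your case enumeration in b), where the paper instead uses a second Rouch\'e estimate on $P'$ plus Cohn. For d) the paper gets $|a+b|<1$ by two triangle inequalities and finishes with the reverse triangle inequality; you instead extract the exact relation $\mathrm{Re}(\tau\overline{\sigma})=-\tfrac{1}{2}|\tau|^2|\sigma|^2$ from the phase identities $\rho=\tau^2/|\tau|^2$ and $\pi=\sigma^2/|\sigma|^2$ (I verified this by multiplying the vanishing-coefficient relation by $\overline{\tau}\overline{\sigma}$), leading to $|e_1|^2=1-(1-|\tau|^2)(1-|\sigma|^2)>1$; this is slightly longer than the paper's argument but yields an identity rather than just a bound. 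Two small repairs: in your equality discussion for $\left|\sum_k u_k^2\right|\le 4$ the parenthetical ``(or triple)'' is wrong, since a triple zero $a$ accompanied by the simple zero $-a$ \emph{does} achieve equality and $|e_1|=2$, exactly as the paper's triple-zero lemma shows --- you handle that configuration separately and correctly in b), so nothing breaks, but the sentence should be amended; and your enumeration in b) should also dispose of a quadruple unimodular zero $a$, which is excluded at once because it would force $e_2=6a^2\neq 0$.
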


\begin{proof}
  We have $P^{\prime }\left( u\right) =4\overline{z_{1}z_{2}}u^{3}-3\left(
  \overline{z_{1}}+\overline{z_{2}}\right) u^{2}+\left( z_{1}+z_{2}\right) $
  and $P^{\prime \prime }\left( u\right) =12\overline{z_{1}z_{2}}u^{2}-6\left(
  \overline{z_{1}}+\overline{z_{2}}\right) u$.
  \begin{enumerate}[{\rm a)}]
    \item  Assume that $\left\vert z_{1}+z_{2}\right\vert <\left\vert
           z_{1}z_{2}\right\vert $. Then for $u\in \partial \mathbb{D}$
           we have
          \begin{equation*}
             \left\vert 4\overline{z_{1}z_{2}}u^{3}\right\vert
               =4\left\vert z_{1}z_{2}\right\vert
               >4\left\vert z_{1}+z_{2}\right\vert
               \geq \left\vert-3\left( \overline{z_{1}}
                     +\overline{z_{2}}\right) u^{2}+\left(z_{1}+z_{2}\right)
                     \right\vert \,
          \end{equation*}%
          It follows by Rouch\'{e}'s theorem \cite[3.10]{sg} that the derivative
          $P^{\prime }$ has all its zeros in the unit disk.
          By Cohn's theorem \cite{cohn}, $P$ has all its
          four zeros on the unit circle $\partial \mathbb{D}$.

          Moreover, $P\left( u\right) =u^{m}Q\left( z\right)
         +e^{i\theta }Q^{\ast}\left( u\right) $ for $m=3$,
         $\theta =\pi $ and
         $Q\left( u\right) =\overline{z_{1}z_{2}}u^{3}
            +\left( z_{1}+z_{2}\right) $.
         The roots of $Q$ have modulus
         $\sqrt[3]{\frac{\left\vert z_{1}+z_{2}\right\vert }{\left\vert
         z_{1}z_{2}\right\vert }}$.
          If $\left\vert z_{1}+z_{2}\right\vert
              <\left\vert z_{1}z_{2}\right\vert $,
         Theorem 1 from \cite{ck} shows that $P$ has four
         simple zeros on the unit circle.
    \item Now assume that
          $\left\vert z_{1}+z_{2}\right\vert >2\left\vert
          z_{1}z_{2}\right\vert $.
          For $u\in \partial \mathbb{D}$ we have
          \begin{equation*}
            \left\vert -3\left( \overline{z_{1}}+\overline{z_{2}}\right)
            u^{2}\right\vert =3\left\vert z_{1}+z_{2}\right\vert >4\left\vert
            z_{1}z_{2}\right\vert +\left\vert z_{1}+z_{2}\right\vert
            \geq \left\vert 4%
            \overline{z_{1}z_{2}}u^{3}+\left( z_{1}+z_{2}\right) \right\vert
          \end{equation*}%
          and it follows using Rouch\'{e}'s theorem that $P^{\prime }$
          has exactly two zeros in the closed unit disk.
          Cohn's theorem shows that $P$ cannot have all
          its zeros on $\partial \mathbb{D}$.
          By Lemma \ref{lem:mylem28}, $P$ has at
          least two unimodular zeros, therefore $P$ has exactly
          two unimodular zeros.
          By Lemma \ref{lem:doubleuni}, these unimodular zeros are simple.

          An alternative way to prove that $P$ has exactly
          two unimodular zeros is indicated below.
          Assume by contrary that $P$ has four unimodular zeros.
          Using the Gauss-Lucas theorem two times, it follows that
          each of the derivatives $P^{\prime }$and
          $P^{\prime \prime }$ has all its zeros in the closed unit disc
          $|z|\leq 1$.
          The zeros of $P^{\prime \prime }$ are $0$ and
          $\frac{\overline{z_{1}}+\overline{z_{2}}}{2\overline{z_{1}z_{2}}}$.
          Then, under the assumption
          $\left\vert z_{1}+z_{2}\right\vert
            >2\left\vert z_{1}z_{2}\right\vert $, the
          second derivative $P^{\prime \prime }$ has a zero in $|z|>1$, which is a
          contradiction.
    \item Assume that $P$ has four simple unimodular zeros.
          Then $P^{\prime }$ has all its zeros in the closed unit disk.
          If $P^{\prime }$ has a unimodular zero $a$,
          then $P\left( a\right) =0$ according to \cite{bm}, therefore $a$
          is a zero of $P$ of multiplicity at least $2$, a contradiction.
          It follows that $P^{\prime }$ has all its zeros in the unit disk.
          By Gauss-Lucas theorem, $P^{\prime \prime }$ also has all its zeros
          in the unit disk,
          therefore $\left\vert z_{1}+z_{2}\right\vert <2\left\vert
          z_{1}z_{2}\right\vert $.
     \item Now suppose that $P$ has exactly two simple unimodular zeros,
           $a$ and $b$.
           Let $c$ and $\frac{1}{\overline{c}}$ the other zeros of $P$,
           with $\left\vert c\right\vert <1$.
           Then $P\left( u\right) =\overline{z_{1}}\overline{z_{2}}
           \left( u-a\right) \left( u-b\right) \left( u-c\right) \left(
           u-\frac{1}{\overline{c}}\right) $.
           The coefficient of $u^{2}$ in $P\left(u\right) $ vanishes,
           therefore
           \begin{equation*}
              ab+\frac{c}{\overline{c}}+\left( a+b\right)
                 \left( c+\frac{1}{\overline{c}}\right) =0,
           \end{equation*}%
           and $a+b=-\frac{ab}{c+\frac{1}{\overline{c}}}-\frac{c}{\left\vert
           c\right\vert ^{2}+1}$.
           Because $\left\vert \frac{ab}{c+\frac{1}{\overline{c}}}\right\vert
             =\frac{1}{\left\vert c+\frac{1}{\overline{c}}\right\vert }
             <\frac{1}{2}$ and
           $\frac{\left\vert c\right\vert }{\left\vert c\right\vert ^{2}+1}
             <\frac{1}{2}$, we get
           $\left\vert a+b\right\vert <1$. Considering the
           coefficient of $u^{3}$ in $P\left( u\right) $ we obtain
           $\frac{\overline{z_{1}}+\overline{z_{2}}}{\overline{z_{1}z_{2}}}
            =a+b+c+\frac{1}{\overline{c}}$.
           Then $\frac{\left\vert z_{1}+z_{2}\right\vert }
                      {\left\vert z_{1}z_{2}\right\vert }
                \geq \left\vert \left\vert c+\frac{1}{\overline{c}}
                     \right\vert -\left\vert a+b\right\vert \right\vert
                >1$.
  \end{enumerate}
\end{proof}

\begin{example}
  Let $z_{1}=\left( 1+t\right) e^{i\alpha }$ and
  $z_{2}=(1+t)e^{i\left( \alpha+t\right) }$, where $t>0$ and
  $\alpha \in (-\pi ,\pi ]$. By Corollary {\rm \ref{cor:mycor516}},
  the equation \eqref{eq:equation} has four simple unimodular roots
  in this case. On the other hand,
  $\frac{\left\vert z_{1}+z_{2}\right\vert }
        {\left\vert z_{1}z_{2}\right\vert }
    =\left( 1+t\right) \left( 1+e^{-it}\right)\rightarrow 2$
  as $t\rightarrow 0$, therefore the constant $2$ in
  Proposition {\rm \ref{prop:foursimple} \ref{item:c})}
  cannot be replaced by a  smaller constant.
\end{example}

We give a direct proof for the following consequence of Proposition
\ref{prop:foursimple}.

\begin{cor}
  If $P(u)=\overline{z_{1}}\overline{z_{2}}u^{4}-(\overline{z_{1}}
       +\overline{z_{2}})u^{3}+(z_{1}+z_{2})u-z_{1}z_{2}$
  has one double zero and two simple zeros on the unit circle,
  then $\left\vert z_{1}z_{2}\right\vert \leq
  \left\vert z_{1}+z_{2}\right\vert \leq 2\left\vert z_{1}z_{2}\right\vert $.
\end{cor}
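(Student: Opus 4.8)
The plan is to translate the hypotheses directly into two algebraic relations among the four unimodular roots and read off the bound from them, without appealing to Proposition~\ref{prop:foursimple}. Write $p=z_{1}z_{2}$ and $s=z_{1}+z_{2}$, so that
\[
  P(u)=\overline{p}\,u^{4}-\overline{s}\,u^{3}+s\,u-p ,
\]
and factor $P(u)=\overline{p}\,(u-a)^{2}(u-b)(u-c)$, where $a$ is the double zero and $b,c$ the two simple zeros, all of modulus $1$. Comparing the leading coefficient $\overline{p}$ with the coefficient of $u^{3}$ gives $2a+b+c=\overline{s}/\overline{p}$. Since $|\overline{s}/\overline{p}|=|s|/|p|$, the quantity we must estimate is
\[
  \frac{|z_{1}+z_{2}|}{|z_{1}z_{2}|}=\frac{|s|}{|p|}=|2a+b+c| .
\]
This identity is the crux: it reduces the corollary to a purely geometric statement about three points on the unit circle.

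Next I would normalise by the double root. Put $\beta=b/a$ and $\gamma=c/a$; these are again unimodular and, since $|a|=1$, the target becomes $|2a+b+c|=|2+\beta+\gamma|$. The only remaining piece of information is that the coefficient of $u^{2}$ in $P$ vanishes, i.e. $a^{2}+2a(b+c)+bc=0$; dividing by $a^{2}$ turns this into the single constraint
\[
  1+2(\beta+\gamma)+\beta\gamma=0 .
\]

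Finally, set $S=\beta+\gamma$, so that $\beta\gamma=-1-2S$. I would exploit unimodularity of $\beta,\gamma$ in the form $\overline{S}=\overline{\beta}+\overline{\gamma}=(\beta+\gamma)/(\beta\gamma)=S/(\beta\gamma)$, and combine it with $\beta\gamma=-1-2S$ to eliminate $\beta\gamma$: this yields $S+\overline{S}=-2|S|^{2}$, i.e. $\mathrm{Re}\,S=-|S|^{2}$. Hence $S$ lies on the circle $|S+\tfrac12|=\tfrac12$, so $\mathrm{Re}\,S\in[-1,0]$. Writing $S=x+iy$ and using $x^{2}+y^{2}=-x$ one computes $|2+S|^{2}=(2+x)^{2}+y^{2}=4+3x\in[1,4]$, which gives exactly $|z_{1}z_{2}|\le|z_{1}+z_{2}|\le 2|z_{1}z_{2}|$.

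The computations are short, so the main conceptual step will be the first one — recognising that the ratio $|z_{1}+z_{2}|/|z_{1}z_{2}|$ is precisely $|2a+b+c|$ and that the vanishing of the $u^{2}$-coefficient is the only extra constraint needed. A minor point to check at the end is the degenerate boundary behaviour: $x=-1$ occurs when $\beta,\gamma$ are the primitive cube roots of unity, so the lower bound $|z_{1}+z_{2}|=|z_{1}z_{2}|$ is attained, whereas $x=0$ forces $\beta=1$ or $\gamma=1$, i.e. $b=a$ or $c=a$, contradicting that $a$ is a double and $b,c$ simple roots; thus the upper inequality is in fact strict, though the stated $\le$ is all that is required.
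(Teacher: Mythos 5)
Your proof is correct, and it begins exactly as the paper's does: factor $P(u)=\overline{z_{1}z_{2}}(u-a)^{2}(u-b)(u-c)$ with $|a|=|b|=|c|=1$, read $|z_{1}+z_{2}|/|z_{1}z_{2}|=|2a+b+c|$ off the coefficient of $u^{3}$, and use the vanishing of the coefficient of $u^{2}$, namely $a^{2}+2a(b+c)+bc=0$, as the sole constraint. The two arguments differ only in how that constraint is exploited. The paper eliminates the \emph{sum}, $b+c=-(a^{2}+bc)/(2a)$, rewrites $2a+b+c=\tfrac{3}{2}a-\tfrac{bc}{2a}$, and finishes in one line with the triangle inequality, since $|a|=|bc|=1$. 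You eliminate the \emph{product} instead: with $\beta=b/a$, $\gamma=c/a$, $S=\beta+\gamma$ you get $\beta\gamma=-1-2S$, and you then need the extra unimodularity relation $\overline{S}=S/(\beta\gamma)$ to confine $S$ to the circle $\mathrm{Re}\,S=-|S|^{2}$, whence $|2+S|^{2}=4+3\,\mathrm{Re}\,S\in[1,4]$. (All the divisions are legitimate since $\beta\gamma\neq 0$.) This costs an extra step compared with the paper's triangle-inequality finish, but it buys more information: it determines the exact locus of $S$, shows that $\mathrm{Re}\,S=0$ forces $b=a$ or $c=a$ so the upper inequality is in fact strict under the stated hypothesis, and locates the lower endpoint at $b/a,c/a$ equal to the primitive cube roots of unity — none of which is visible from the paper's two applications of the triangle inequality.
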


\begin{proof}
  Assume that $P$ has one double unimodular zero $a$ and two simple
  unimodular zeros $b,c$.
  Then $P\left( u\right) =\overline{z_{1}}\overline{z_{2}}\left(
   z-a\right) ^{2}\left( z-b\right) \left( z-c\right) $.

  The coefficient of $u^{2}$ in $P\left( u\right) $ vanishes,
  \begin{equation*}
    a^{2}+bc+2a\left( b+c\right) =0.
  \end{equation*}%
  Considering the coefficient of $u^{3}$ in $P\left( u\right) $ we obtain
  $\frac{\overline{z_{1}}+\overline{z_{2}}}{\overline{z_{1}z_{2}}}=2a+b+c=2a
   -\frac{a^{2}+bc}{2a}=\frac{3a^{2}-bc}{2a}=\frac{3}{2}a-\frac{bc}{2a}$.
  Then $\frac{\left\vert z_{1}+z_{2}\right\vert }
             {\left\vert z_{1}z_{2}\right\vert }
         \leq \left\vert \frac{3}{2}a\right\vert +\left\vert
               -\frac{bc}{2a}\right\vert =2$
  and $\frac{\left\vert z_{1}+z_{2}\right\vert }
            {\left\vert z_{1}z_{2}\right\vert }
         \geq \left\vert \left\vert \frac{3}{2}a\right\vert
            -\left\vert -\frac{bc}{2a}\right\vert \right\vert =1$.
\end{proof}

%\input{mmmult2.tex}
 %%%%%%%%%%%%%%%%%%%%%%%%%%%%%%%%%%%%%
%%%%%%%%%%%%%%%%%%%%%%%%%%%%%%%%%%%%%
%%%%%%%%%%%%%%%%%%%%%%%%%%%%%%%%%%%%%

\bigskip

\textbf{Acknowledgments.} This research was begun
during the Roma\-nian-Finnish Seminar in Bucharest, Romania, June 20-24,
2016, where the authors P.H., M.M., and M.V. met. During a workshop at the
Tohoku University, Sendai, Japan, in August 2016 organized by Prof. T.
Sugawa,  M.F., P.H., and M.V. met and had several discussions about the
topic of this paper. P. H. and M. V. are indebted to Prof. Sugawa for kind
and hospitable arrangements during our visit. This work was partially supported by JSPS KAKENHI
    Grant Number 15K04943. The second author was supported by University of Turku Foundation and CIMO. The authors are indebted to
Prof. G.D. Anderson for a number of remarks on this paper.

%%%%%%%%%%%%%%%%%%%%%%%%%%%%%%%%%%%%%%%%%%%%%
\nocite{aa}
\nocite{pac}
\nocite{hkvz}

\bibliographystyle{siamplain}
\bibliography{PAref6}

%\addbibresource{PAref6.bib}

\end{document}